\newtheorem{theorem}{Theorem}
\newtheorem{proposition}{Proposition}
\newtheorem{question}{Question}
\newtheorem{corollary}[theorem]{Corollary}
\newtheorem{definition}{Definition\rm}
\newcounter{paraga}[section]
\newtheorem*{thma}{Theorem 1'}
\newtheorem*{thmb}{Theorem 2'}
\newtheorem*{thmc}{Theorem 3'}
\begin{document}

\def\MP{\,{<\hspace{-.5em}\cdot}\,}
\def\SP{\,{>\hspace{-.3em}\cdot}\,}
\def\PM{\,{\cdot\hspace{-.3em}<}\,}
\def\PS{\,{\cdot\hspace{-.3em}>}\,}
\def\EP{\,{=\hspace{-.2em}\cdot}\,}
\def\PP{\,{+\hspace{-.1em}\cdot}\,}
\def\PE{\,{\cdot\hspace{-.2em}=}\,}
\def\N{\mathbb N}
\def\C{\mathbb C}
\def\Q{\mathbb Q}
\def\R{\mathbb R}
\def\T{\mathbb T}
\def\A{\mathbb A}
\def\Z{\mathbb Z}
\def\demi{\frac{1}{2}}

\begin{titlepage}
\author{Abed Bounemoura~\footnote{Email: abedbou@gmail.com. CNRS - CEREMADE, Université Paris Dauphine \& IMCCE, Observatoire de Paris.}}
\title{\LARGE{\textbf{Generic perturbations of linear integrable Hamiltonian systems}}}
\end{titlepage}

\maketitle

\begin{abstract}
In this paper, we investigate perturbations of linear integrable Hamiltonian systems, with the aim of establishing results in the spirit of the KAM theorem (preservation of invariant tori), the Nekhoroshev theorem (stability of the action variables for a finite but long interval of time) and Arnold diffusion (instability of the action variables). Whether the frequency of the integrable system is resonant or not, it is known that the KAM theorem does not hold true for all perturbations; when the frequency is resonant, it is the Nekhoroshev theorem which does not hold true for all perturbations. Our first result deals with the resonant case: we prove a result of instability for a generic perturbation, which implies that the KAM and the Nekhoroshev theorem do not hold true even for a generic perturbation. The case where the frequency is non-resonant is more subtle. Our second result shows that for a generic perturbation, the KAM theorem holds true. Concerning the Nekhrosohev theorem, it is known that one has stability over an exponentially long (with respect to some function of $\varepsilon^{-1}$) interval of time, and that this cannot be improved for all perturbations. Our third result shows that for a generic perturbation, one has stability for a doubly exponentially long interval of time. The only question left unanswered is whether one has instability for a generic perturbation (necessarily after this very long interval of time).
\end{abstract}

\section{Introduction}\label{s1}

Let $n \geq 2$ be an integer, $\T^n:=\R^n / \Z^n$ and $B \subseteq \R^n$ an open bounded convex subset. Consider a Hamiltonian function $H :\T^n \times B \rightarrow \R$ of the form
\[ H(\theta,I)=h(I)+\varepsilon f(\theta,I), \quad (\theta,I)=(\theta_1,\dots,\theta_n,I_1,\dots,I_n)\in \T^n \times B, \quad \varepsilon \geq 0 \] 
and its associated Hamiltonian system
\begin{equation*}
\begin{cases} 
\dot{\theta}_i(t)=\partial_{I_i} H(\theta(t),I(t))=\partial_{I_i}h(I(t))+ \varepsilon \partial_{I_i} f(\theta(t),I(t)), \\
\dot{I}_i(t)=- \partial_{\theta_i} H(\theta(t),I(t))=- \varepsilon \partial_{\theta_i} f(\theta(t),I(t)) 
\end{cases}
\quad 1 \leq i \leq n. 
\end{equation*} 
For $\varepsilon=0$, all solutions are stable: the action variables are constant, $I(t)=I_0$ for all $t\in\R$, hence their level sets $\{I(t)=I_0\} \times \T^n$ define invariant tori on which the dynamics is quasi-periodic of frequency $\nabla h(I_0)$. A central question is whether solutions remain stable or become unstable when $\varepsilon>0$, and fundamental results on this question are due to Kolmogorov, Arnold and Nekhoroshev.

In \cite{Kol54}, Kolmogorov proved that for a non-degenerate $h$ and for all $f$, the system defined by $H$ still has many invariant tori, provided it is real-analytic and $\varepsilon$ is small enough. What he showed is that among the set of unperturbed invariant tori, there is a subset of positive Lebesgue measure (the complement of which has a measure going to zero when $\varepsilon$ goes to zero) who survives any sufficiently small perturbation, the tori being only slighted deformed. This result has been re-proved and extended in many ways, with important contributions of Arnold and Moser, and this is nowadays called the KAM theorem: the literature on the subject is enormous, we simply refer to \cite{Pos01} and \cite{Sev03} for recent surveys. A consequence of Kolmogorov's result is that for most solutions, the action variables are almost constant for all times, that is 
\[ |I(t)-I_0|\lesssim \sqrt{\varepsilon}, \quad t \in \R. \]
This last inequality is in fact true for all solutions if $n=2$, under a different non-degeneracy condition as proved by Arnold (\cite{Arn63a}, \cite{Arn63b}) with a different version of the KAM theorem.

But this stability of the action variables over an infinite time is not true for all solutions if $n \geq 3$. In \cite{Arn64}, Arnold constructed an example of $h$ (belonging to the class of non-degenerate functions to which the KAM theorem applies) and an example of $f$ such that the system associated to $H$ has a solution for which
\[ |I(t)-I_0|\geq 1, \quad t \geq \tau(\varepsilon)>0, \]
for all $\varepsilon>0$ sufficiently small. He then conjectured that this phenomenon holds true for a generic $h$ and a generic $f$: this is usually called the Arnold diffusion conjecture, and it is one of the main open problem. 

Then, in the seventies, Nekhoroshev proved (\cite{Nek77}, \cite{Nek79}) that for any $n\geq 2$, for a generic $h$ and for all $f$, along all solutions we have
\[ |I(t)-I(0)|\lesssim \varepsilon^b, \quad |t|\lesssim \exp\left(\varepsilon^{-a}\right),  \]  
for some positive exponents $a$ and $b$, provided that $\varepsilon$ is small enough and that the system is real-analytic. So solutions which do not lie on invariant tori are stable not for all time, but during an interval of time which is exponentially long with respect to some power of the inverse of $\varepsilon$. The consequence on Arnold diffusion is that the time of diffusion $\tau(\varepsilon)$ is exponentially large. The class of generic functions in the Nekhoroshev theorem is different from the class of non-degenerate functions in the KAM theorem of  Kolmogorov and Arnold, but their intersection contains, respectively, the class of strictly convex and strictly quasi-convex functions.

It is our purpose in this paper to investigate the fate of these theorems and conjecture in the special case where $h$ is linear, that is
\begin{equation}\label{lin}
h(I)=\alpha \cdot I:=\alpha_1I_1+\cdots \alpha_nI_n, \quad \alpha \in \R^n \setminus \{0\}.
\end{equation}
But first let us give two reasons why it could be interesting to study perturbations of such linear integrable systems. The first one is obvious: locally around any point the main approximation of a non-linear integrable Hamiltonian is given by its linear part, hence perturbations of linear integrable systems describe the local dynamics of perturbations of non-linear integrable systems. The second reason is that it encompasses several interesting problems. For instance, one encounters such systems (or at least very similar systems) when studying the dynamics in the vicinity of a linearly stable (elliptic) equilibrium point, or a Lagrangian quasi-periodic invariant torus or even more generally a linearly stable isotropic reducible quasi-periodic invariant torus. These problems are of real physical interest (we refer to the work of Chirikov, see \cite{CV89} for instance, where these systems are called ``weakly" non-linear as any non-linearity has to come from the perturbation). 

So let us fix $h$ linear, that is $h$ is as in~\eqref{lin} with some vector $\alpha \in \R^n$, and let us come to the question of whether the KAM theorem, the Nekhoroshev theorem and Arnold diffusion holds true. The first point to observe is that $h$, being very degenerate, does not belong to the class of integrable Hamiltonians to which the KAM or Nekhoroshev theorem applies, and Arnold's construction of unstable solutions does not directly apply in this setting. But this does not necessarily means than those results or constructions are not applicable. There are two possibilities for the vector $\alpha \in \R^n$: either it is resonant, that is there exists $k \in \Z^n\setminus \{0\}$ such that the Euclidean scalar product $k\cdot\alpha=0$, or it is non-resonant, and the results will depend drastically on whether $\alpha$ is resonant or not.

So first assume $\alpha$ is resonant. Examples (that is, for specific perturbations) from \cite{Mos60} and \cite{Sev03} show, respectively, that the Nekhoroshev and the KAM theorem cannot be true for all perturbations. Yet it may be possible that they hold true for a generic perturbation. We will show here that this cannot happen. In fact, for a generic perturbation, we will show that there is a fixed open ball $B_*$ in action space such that for $\varepsilon$ sufficiently small and for all initial action $I_0 \in B_*$, one has
\[ |I(t)-I_0|\sim 1, \quad t \sim \varepsilon^{-1}. \]  
This will be the content of Theorem~\ref{thm1}, to which we refer for a precise statement. Hence, in this case, one has diffusion for a generic perturbation and the conjecture of Arnold is true. The fact that the diffusion time is of order $\varepsilon^{-1}$ implies that the Nekhoroshev theorem cannot hold true for a generic perturbation. Moreover, as the result is true for an open ball of initial action whose radius is fixed as $\varepsilon$ goes to zero, the KAM theorem is also violated for a generic perturbation (we refer to the comments after Theorem~\ref{thm1} for a more detailed explanation). 

Next assume that $\alpha$ is non-resonant. The example of \cite{Sev03} applies as well in this case, and the KAM theorem cannot be true for any perturbation. However, we will prove in Theorem~\ref{thm2} that the KAM theorem holds true for a generic perturbation. As far as the Nekhoroshev theorem is concerned, it is already known that for all perturbations and for all solutions, one has the stability estimates
\[ |I(t)-I(0)|\lesssim \Delta_\alpha(\varepsilon^{-1})^{-1}, \quad |t|\lesssim \exp\left(\Delta_\alpha(\varepsilon^{-1})\right),  \] 
where $\Delta_\alpha$ is an increasing function encoding some arithmetic properties of the vector $\alpha$. In the special case where $\alpha$ satisfies a Diophantine condition (that is, $|k\cdot\alpha|\gtrsim |k|^{-\tau}$ for some $\tau\geq n-1$), then $\Delta_\alpha(x) \gtrsim x^{\frac{1}{1+\tau}}$ and one recovers exponential estimates as in the Nekhoroshev theorem. This was proved in \cite{Bou12} in the general case; the Diophantine case was known before, see for instance~\cite{Fas90}. In~\cite{Bou12} it is also proved that there exists a sequence of $\varepsilon_j$-perturbation, $j \in \N$, with $\varepsilon_j$ going to zero as $j$ goes to infinity, such that
\[ |I(t)-I(0)|\sim \Delta_\alpha(\varepsilon_{j}^{-1})^{-1}, \quad |t|\sim \exp\left(\Delta_\alpha(\varepsilon_{j}^{-1})\right),  \] 
and therefore the above stability estimates cannot be improved for an arbitrary perturbation. Yet we will prove that for a generic perturbation, one has much better stability estimates: the time of stability is a double exponential, that is 
\[ |I(t)-I(0)|\lesssim \Delta_\alpha(\varepsilon^{-1})^{-1}, \quad |t|\lesssim \exp\exp\left(\Delta_\alpha(\varepsilon^{-1})\right).  \] 
This will be proved in Theorem~\ref{thm3}. The only remaining question is therefore the following.

\begin{question}\label{quest1}
Assume $\alpha$ is non-resonant. Can one prove the existence of unstable solutions (in the sense of Arnold) for a generic perturbation of $h(I)=\alpha\cdot I$?
\end{question}

Of course, one knows how to construct specific perturbation for which the system has unstable solution: this is done in \cite{Bou12}, but also in \cite{Dou88} where one can find ``less degenerate" perturbations which however cannot be made analytic. For a generic perturbation, according to Theorem~\ref{thm3} the time of instability has to be doubly exponentially large, which is certainly a sign of the difficulty of the problem. As a matter of fact, this question and the Arnold diffusion conjecture are both special case of the following question.

\begin{question}
Consider an arbitrary integrable system defined by a function $h$, with $n\geq 3$ degrees of freedom. Can one prove the existence of unstable solutions for a generic perturbation of $h$?
\end{question}  

Arnold's original conjecture concerns generic integrable Hamiltonians, which certainly do not encompass linear integrable Hamiltonians. Yet the question makes sense for an arbitrary integrable Hamiltonian, and to the best of our knowledge, this is widely open: it is not known if one can construct an example of integrable Hamiltonian (with $n \geq 3$) for which all solutions remain forever stable after a generic perturbation (if it possible, this of course would give a negative answer to the question above).

To conclude, let us just add that we believe that Question~\ref{quest1} is actually more difficult to the conjecture of Arnold: as it will be clear from the arguments below, when dealing with a generic perturbation of a linear (non-resonant) integrable system, one is naturally led to look at a generic perturbation of a non-linear but singular (that is, $\varepsilon$-dependent) integrable system, and this singular case turns out to be more complicated than the regular case.

\section{Main results}\label{s2}

Let us now state more precisely our results. We will assume, without loss of generality, that our vector $\alpha=(\alpha_1,\dots,\alpha_n) \in \R^n$ has unit norm, that is
\[ |\alpha|:=\max_{1 \leq i \leq n}|\alpha_i|=1. \]
Throughout the text, we shall use $|\,\cdot\,|$ to denote the supremum norm for vectors in $\R^n$, but also the induced norm on tensor spaces associated to $\R^n$.

Our perturbation $f_\varepsilon$ will be assumed to depend explicitly on $\varepsilon \geq 0$, but we will require the following two conditions to hold. First, we will assume that $f_\varepsilon$ is uniformly bounded (with respect to some suitable norm, as we will define below) with respect to $\varepsilon$, so that $\varepsilon$ really describes the magnitude of the perturbation. Then, we will also assume that when $\varepsilon$ goes to zero, $f_\varepsilon$ converges (for the appropriate topology) to $f_0=f$. The various generic conditions will be imposed on $f$, and they will continue to hold for $f_\varepsilon$ for all $\varepsilon$ small enough, uniformly in $\varepsilon$.

\subsection{Resonant case}

Our first result deals with a resonant frequency vector $\alpha \in \R^n$. It is no restriction to assume that $\alpha=(0,\tilde{\alpha}) \in \R^d \times \R^{n-d}$ where $\tilde{\alpha} \in \R^{n-d}$ is non-resonant, for some $1 \leq d \leq n-1$. 

The perturbation $f_\varepsilon: \T^{n} \times B \rightarrow \R$ will be assumed to be only sufficiently smooth. More precisely, given an integer $k \geq 3$, we assume that $f_\varepsilon$ belongs to $C^k(\T^n \times \bar{B})$, which is the space of $k$-times differentiable functions on $\T^n \times B$ whose differentials up to order $k$ extend by continuity to $\T^n \times \bar{B}$, where $\bar{B}$ is the closure of $B$. This is a Banach space with the norm
\[ |f_\varepsilon|_{C^k(\T^n \times \bar{B})}:=\max_{0 \leq l \leq k}\left(\sup_{(\theta,I)\in \T^n \times \bar{B}}|\nabla^l f_\varepsilon(\theta,I)|\right). \]
To introduce our generic condition on the perturbation, let us decompose $\theta=(\bar{\theta},\tilde{\theta}) \in \T^d \times \T^{n-d}$ and define the (partial) average $\bar{f}_\varepsilon : \T^{d} \times B \rightarrow \R$ by
\[ \bar{f}_\varepsilon(\bar{\theta},I):=\int_{\T^{n-d}}f_\varepsilon(\bar{\theta},\tilde{\theta},I)d\tilde{\theta}. \]
Let us further define $\bar{f}=\bar{f}_0$. Our condition is then:

\bigskip

$(G1)$ There exists $I_* \in B$ such that the function $\bar{f}_*: \T^d \rightarrow \R$, defined by $\bar{f}_*(\bar{\theta})=\bar{f}(\bar{\theta},I_*)$, is non-constant. Hence there exists $\bar{\theta}_* \in \T^d$ such that $\partial_{\bar{\theta}}\bar{f}(\bar{\theta}_*,I_*) \neq 0$. 

\bigskip

Since the map $f \in C^k(\T^n \times \bar{B}) \mapsto \bar{f} \in C^k(\T^d \times \bar{B})$ is linear, bounded, surjective and hence open, it is easy to check that the subset of functions $f$ satisfying $(G1)$ is both open and dense (in an appropriate topology; for instance it is open in $C^0$ topology and dense in $C^k$ topology, and even in higher regularity when $f$ is more regular). Eventually, let us consider
\begin{equation}\label{Ham1}
\begin{cases}\tag{$H_{res}$}
H(\theta,I)=\alpha\cdot I+\varepsilon f_\varepsilon(\theta,I), \quad \varepsilon\geq 0, \quad f_0=f, \\
\alpha=(0,\tilde{\alpha})\in \R^d\times \R^{n-d}, \quad  \tilde{\alpha} \in \R^{n-d} \; \mathrm{nonresonant}, \quad |\alpha|=1,  \\
|f_\varepsilon|_{C^k(\T^n \times \bar{B})}\leq 1, \quad k\geq 3. 
\end{cases}
\end{equation} 
Before stating our first result, we need to introduce the following notation: given a positive constant $\zeta$, we define
\[ B-\zeta:=\{I \in B \; | \; B(I,\zeta) \subset B\} \]
where $B(I,\zeta)$ is the ball of radius $\zeta$ around $I$. We can now state our first theorem.

\begin{theorem}\label{thm1}
Let $H$ be as in~\eqref{Ham1}, with $f$ satisfying $(G1)$ and
\[ \lim_{\varepsilon \rightarrow 0}|\partial_{\bar{\theta}}\bar{f}_\varepsilon(\bar{\theta}_*,I_*)-\partial_{\bar{\theta}}\bar{f}(\bar{\theta}_*,I_*)|=0.\] 
Then there exist positive constants $\varepsilon_0$, $c_1$ and $c_2$ and an open ball $B_* \subset B-c_1$ which depend on $h$ and $f$ but not on $\varepsilon$ such that if $0<\varepsilon \leq \varepsilon_0$, for any initial action $I_0 \in B_*$, there exists a solution $(\theta(t),I(t))$ of the system associated to $H$ with $I(0)=I_0$ such that
\[ |I(\tau)-I_0|\geq c_1, \quad \tau=c_2\varepsilon^{-1}. \]
\end{theorem}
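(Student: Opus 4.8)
The strategy is to reduce the problem, via a partial averaging (normal form) procedure over the non-resonant angles $\tilde\theta$, to a much simpler $d$-degrees-of-freedom system for which a drift of the actions $\bar I$ can be exhibited essentially by hand. Write $I=(\bar I,\tilde I)\in B\subseteq\R^d\times\R^{n-d}$ and $\theta=(\bar\theta,\tilde\theta)$. Since $\tilde\alpha$ is non-resonant, for any fixed $I$ the linear flow $\tilde\theta\mapsto\tilde\theta+t\tilde\alpha$ is ergodic on $\T^{n-d}$, and one expects to be able to average $\varepsilon f_\varepsilon$ over $\tilde\theta$ at first order: there should be a symplectic change of coordinates, $\varepsilon$-close to the identity in a suitable $C^{k'}$ sense (with $k'<k$, losing a fixed number of derivatives), conjugating $H$ to
\begin{equation*}
H'(\theta,I)=\alpha\cdot I+\varepsilon\,\bar f_\varepsilon(\bar\theta,I)+\varepsilon\, r_\varepsilon(\theta,I),
\end{equation*}
where the remainder $r_\varepsilon$ is small in the sense that along the flow of $H'$ the actions conjugate to the non-resonant angles move by $O(\varepsilon\cdot o(1))$ on the time scale $\varepsilon^{-1}$ — or, more robustly, one uses a truncated/averaged version so that the error term is genuinely of higher order. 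The point is that the averaged Hamiltonian $\alpha\cdot I+\varepsilon\bar f_\varepsilon(\bar\theta,I)$ no longer depends on $\tilde\theta$, so $\tilde I$ is a conserved quantity for it, and on the time scale $\varepsilon^{-1}$ the effective dynamics in $(\bar\theta,\bar I)$ is governed, after rescaling time by $t=\varepsilon^{-1}s$, by the autonomous Hamiltonian $\bar f_\varepsilon(\bar\theta,\bar I;\tilde I)$ with $\tilde I$ frozen as a parameter.

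Next I would exploit condition $(G1)$ together with the hypothesis that $\partial_{\bar\theta}\bar f_\varepsilon(\bar\theta_*,I_*)\to\partial_{\bar\theta}\bar f(\bar\theta_*,I_*)\neq 0$. Fix $I_*=(\bar I_*,\tilde I_*)$ and $\bar\theta_*$ with $v:=\partial_{\bar\theta}\bar f(\bar\theta_*,I_*)\neq 0$. For the rescaled averaged system, $\dot{\bar\theta}=\partial_{\bar I}\bar f_\varepsilon$, $\dot{\bar I}=-\partial_{\bar\theta}\bar f_\varepsilon$, so starting from $\bar\theta(0)=\bar\theta_*$, $\bar I(0)=\bar I_*$ one has $\dot{\bar I}(0)=-v+o(1)\neq 0$; by continuity (uniform in $\varepsilon$ small, using the $C^k$ bound with $k\ge 3$ to control second derivatives and hence the time interval on which $|\dot{\bar I}-(-v)|$ stays small) there is a fixed $s_*>0$ such that $|\bar I(s_*)-\bar I_*|\ge c_1'$ for a fixed $c_1'>0$. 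Translating back: in the original time this is the time $\tau=c_2\varepsilon^{-1}$ with $c_2=s_*$, and the genuine actions $I(\tau)$ differ from the averaged ones by $o(1)$ (smallness of the normal-form remainder and of the coordinate change), so $|I(\tau)-I_0|\ge c_1$ for a fixed $c_1>0$. To get this for every $I_0$ in a fixed open ball $B_*$ rather than just at $I_*$, note that $\partial_{\bar\theta}\bar f$ is continuous and nonzero at $(\bar\theta_*,I_*)$, hence nonzero with norm $\ge|v|/2$ on a fixed neighborhood; for $I_0$ in a small ball around $I_*$ one restarts the orbit from the angle $\bar\theta_*$ (the initial action is prescribed, the initial angle is free), and the same lower bound on $|\dot{\bar I}(0)|$ gives the same conclusion with constants independent of $\varepsilon$ and of $I_0\in B_*$. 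Shrinking $B_*$ if necessary so that $B_*\subset B-c_1$ ensures the orbit stays in the domain.

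The main obstacle is the first step: making the averaging rigorous in the finitely-differentiable category with an $\varepsilon$-dependent perturbation, and controlling the remainder well enough that its contribution to $I(t)$ over the long time $\varepsilon^{-1}$ is $o(1)$ and not merely $O(1)$. In the analytic non-resonant Nekhoroshev-type setting one would quote \cite{Bou12}; here, with only $C^k$ regularity and only one step of averaging needed, I expect one can get away with an elementary construction — solve the cohomological equation for the $\tilde\theta$-dependent part of $f_\varepsilon$ (possible since $\tilde\alpha$ is non-resonant, at the cost of derivatives, but one step suffices for the required accuracy over time $\varepsilon^{-1}$), or, even more simply, avoid a change of variables altogether and estimate directly $\tilde I(t)-\tilde I(0)$ and the deviation of $\bar\theta,\bar I$ from the averaged flow by a Gronwall argument using only $f\in C^k$, $k\ge 3$. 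A secondary technical point is the uniformity in $\varepsilon$ of all constants, which is exactly what the convergence hypotheses $f_\varepsilon\to f$ and $\partial_{\bar\theta}\bar f_\varepsilon(\bar\theta_*,I_*)\to\partial_{\bar\theta}\bar f(\bar\theta_*,I_*)$ are there to guarantee; one has to be careful that the neighborhood of $(\bar\theta_*,I_*)$ on which the relevant derivative bounds hold, and the time $s_*$, can be chosen once and for all.
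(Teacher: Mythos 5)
Your proposal follows essentially the same route as the paper: a partial averaging over the non-resonant angles $\tilde{\theta}$ (which the paper imports as Proposition~\ref{normal1}, a $C^{k-1}$ normal form from~\cite{Bou13b} whose remainder is of size $\mu_{\tilde{\alpha}}(\varepsilon)=\Delta_{\tilde{\alpha}}(c\varepsilon^{-1})^{-1}=o(1)$, hence contributes $o(1)$ to the actions over the time $\varepsilon^{-1}$), followed by the observation that $(G1)$ forces $\bar{I}$ to drift at speed $\sim\varepsilon$ in an essentially fixed direction during a time $c_2\varepsilon^{-1}$ with $c_2$ small, for any initial action in a fixed ball and with the free initial angle set to $\bar{\theta}_*$, and finally a transfer back through the near-identity symplectic change of coordinates. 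The only caveat is that your first suggested route for the averaging step --- solving the cohomological equation outright ``at the cost of derivatives'' --- fails for a general non-resonant $\tilde{\alpha}$ because the small divisors $k\cdot\tilde{\alpha}$ are uncontrolled, so one must use the truncated averaging you also mention, which yields a remainder that is only $\Delta_{\tilde{\alpha}}(c\varepsilon^{-1})^{-1}$-small rather than $O(\varepsilon)$-small but which still suffices; this is precisely what the quoted normal form provides.
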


This is a result of (fast) instability for a generic perturbation, in the case where the frequency is resonant. As the time of instability is linear with respect to $\varepsilon^{-1}$, this result also implies the failure of Nekhoroshev type estimates for a generic perturbation. Finally, observe that unstable solutions cover, in action space, an open ball which is independent of $\varepsilon$: the set of invariant tori, if any, cannot converge to a set of full measure of invariant unperturbed tori, and therefore the KAM theorem cannot hold true for a generic perturbation. 

\bigskip

Note that our condition $(G1)$ is of local nature, and so is our Theorem~\ref{thm1}. A global condition can be formulated as below: 

\bigskip

$(G1')$ For all $I \in \bar{B}$, there exists $\bar{\theta}=\bar{\theta}(I) \in \T^d$ such that $\partial_{\bar{\theta}}\bar{f}(\bar{\theta},I) \neq 0$. 

\bigskip

It is not hard to prove (for instance, using a transversality argument) that this condition $(G1')$ is still satisfied for an open and dense set of functions $f$. Here's our global statement.

\begin{thma}\label{thm1'}
Let $H$ be as in~\eqref{Ham1}, with $f$ satisfying $(G1')$ and 
\[ \lim_{\varepsilon \rightarrow 0}|\bar{f}_\varepsilon-\bar{f}|_{C^1(\T^d \times \bar{B})}=0.\] 
Then there exist positive constants $\varepsilon_0$, $c_1$ and $c_2$ which depend on $h$ and $f$ but not on $\varepsilon$ such that if $0<\varepsilon \leq \varepsilon_0$, for any initial action $I_0 \in B-c_1$, there exists a solution $(\theta(t),I(t))$ of the system associated to $H$ with $I(0)=I_0$ such that
\[ |I(\tau)-I_0|\geq c_1, \quad \tau=c_2\varepsilon^{-1}. \]
\end{thma}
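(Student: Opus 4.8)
The plan is to use an averaging argument. After rescaling time by $\varepsilon$, the system becomes slow--fast with fast angle $\tilde\theta$, and its slow limit as $\varepsilon\to0$ is an \emph{autonomous} Hamiltonian system on $\T^d\times B$ governed by the average $\bar f$; in that limiting system, condition $(G1')$ together with compactness of $\bar B$ forces the action variable to drift. Concretely, write $I=(\bar I,\tilde I)\in\R^d\times\R^{n-d}$; since $\alpha=(0,\tilde\alpha)$, Hamilton's equations for \eqref{Ham1} read $\dot{\bar\theta}=\varepsilon\partial_{\bar I}f_\varepsilon$, $\dot{\tilde\theta}=\tilde\alpha+\varepsilon\partial_{\tilde I}f_\varepsilon$, $\dot I=-\varepsilon\partial_\theta f_\varepsilon$. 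In the slow time $s=\varepsilon t$ the variables $(\bar\theta,I)$ move with bounded velocity (since $|f_\varepsilon|_{C^1}\le1$), while the fast angle has frequency $\varepsilon^{-1}\tilde\alpha+O(1)\to\infty$. Because $\int_{\T^{n-d}}\partial_{\tilde\theta}f_\varepsilon\,d\tilde\theta=0$ and $\bar f_\varepsilon\to\bar f$ in $C^1$, the formal slow limit is the autonomous system on $\T^d\times B$
\begin{equation*}
\bar\theta'=\partial_{\bar I}\bar f(\bar\theta,\bar I,\tilde I),\qquad \bar I'=-\partial_{\bar\theta}\bar f(\bar\theta,\bar I,\tilde I),\qquad \tilde I'=0,
\end{equation*}
that is, the Hamiltonian flow of $\bar f(\,\cdot\,;\tilde I)$ in $(\bar\theta,\bar I)$ with $\tilde I$ frozen; its right-hand side is Lipschitz ($\bar f\in C^k$, $k\ge3$), so solutions are unique.

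The first real step is the averaging estimate: for every fixed $T>0$ and every initial datum, the solution of \eqref{Ham1} written in slow time converges, uniformly on $s\in[0,T]$ as $\varepsilon\to0$, to the solution of the averaged system with the corresponding slow initial datum. I would obtain this by compactness: the slow components $(\bar\theta,I)(\,\cdot\,;\varepsilon)$ are equi-Lipschitz, hence precompact in $C^0([0,T])$, and any limit point solves the averaged equations because over each revolution of the fast angle the slow variables barely move while $\tilde\theta$ equidistributes on $\T^{n-d}$ --- this is precisely where non-resonance of $\tilde\alpha$ enters, through unique ergodicity of the linear flow $v\mapsto\tilde\theta_0+\tilde\alpha v$ --- so that the time-average of $\partial_\theta f_\varepsilon$ along the orbit converges to $(\partial_{\bar\theta}\bar f,0)$; uniqueness for the averaged equations then promotes subsequential to full convergence. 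Only $\bar f_\varepsilon\to\bar f$ in $C^1$ and the uniform bound $|f_\varepsilon|_{C^k}\le1$ are used here, not convergence of $f_\varepsilon$ itself. I expect this to be the main obstacle: since $\tilde\alpha$ is merely non-resonant (not Diophantine) and $f_\varepsilon$ merely $C^k$, no quantitative cohomological equation is available, so the argument must remain qualitative on a fixed slow-time window --- which, crucially, is all that is needed.

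Next I analyse the autonomous averaged system. Let $\phi(I):=\max_{\bar\theta\in\T^d}|\partial_{\bar\theta}\bar f(\bar\theta,I)|$; it is continuous and, by $(G1')$, strictly positive on the compact set $\bar B$, hence $\phi\ge\delta_0>0$ there. Fix any $I_0\in B$. Since $\bar f(\,\cdot\,,I_0)$ attains its maximum on $\T^d$, the function $|\partial_{\bar\theta}\bar f(\,\cdot\,,I_0)|$ takes every value in $[0,\delta_0]$, so one may choose $\bar\theta_0$ with $|\partial_{\bar\theta}\bar f(\bar\theta_0,I_0)|=\delta_0$. Along the averaged solution issued from $(\bar\theta_0,I_0,\tilde I_0)$ one has $\bar I'(0)=-\partial_{\bar\theta}\bar f(\bar\theta_0,I_0)=:-w$ with $|w|=\delta_0$, and since $\bar f$ has $C^2$-norm at most $1$ the velocity $\bar I'(s)$ stays within $2s$ of $-w$; taking $c_2$ of order $\delta_0$ small, the component of $\bar I'(s)$ along $-w/|w|$ remains $\ge\delta_0/2$ on $[0,c_2]$, so the projection of $I_{\mathrm{av}}(s)-I_0$ onto $-w/|w|$ increases monotonically and is at least $c_1:=\delta_0 c_2/2$ at $s=c_2$, whence $|I_{\mathrm{av}}(c_2)-I_0|\ge c_1$, while $|I_{\mathrm{av}}(s)-I_0|$ stays of order $\delta_0 c_2$ throughout $[0,c_2]$. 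The positive constants $\delta_0,c_1,c_2$ depend only on $h$ and $f$.

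Finally I combine the two. Let $I_0\in B-c_1$ (shrinking $c_1$ relative to $c_2$ if necessary so that the averaged arc above stays inside $B$), and pick $\theta(0)=(\bar\theta_0,\tilde\theta_0)$ with $\bar\theta_0$ as in the previous step and $\tilde\theta_0$ arbitrary. By the averaging estimate, for $\varepsilon$ small enough the true solution of \eqref{Ham1} in slow time stays within $c_1/2$ of the averaged solution on $[0,c_2]$; hence it remains in $B$ there (so it is defined on the whole interval) and $|I(c_2)-I_0|\ge c_1/2$, which back in the original time reads $|I(\tau)-I_0|\gtrsim c_1$ with $\tau=c_2\varepsilon^{-1}$. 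After relabelling the constants, this is the assertion to be proved.
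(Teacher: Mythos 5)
Your proposal is correct in outline, but it takes a genuinely different route from the paper. The paper deduces Theorem 1' from Proposition~\ref{normal1} (the quantitative normal form of \cite{Bou13b}): after a symplectic change of variables the Hamiltonian becomes $\alpha\cdot I+\varepsilon\bar f_\varepsilon(\bar\theta,I)+\varepsilon f_\varepsilon^+(\theta,I)$ with $|f_\varepsilon^+|_{C^{k-1}}\lesssim\mu_{\tilde\alpha}(\varepsilon)$, and the drift is then read off directly from the equations of motion, exactly as in Theorem~\ref{thm1}; the only new ingredient for the primed statement is the compactness argument yielding a uniform lower bound $\zeta$, which you reproduce with your function $\phi$ and $\delta_0$. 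You bypass the normal form entirely and invoke a soft first-order averaging principle in slow time, based on unique ergodicity of the linear flow of $\tilde\alpha$ on $\T^{n-d}$. This is viable: the family $\{\partial_\theta f_\varepsilon(\bar\theta,\cdot,I)\}$ is bounded in $C^1$, hence precompact in $C^0$, so Birkhoff averages along the linear flow converge uniformly over this family and over the initial phase, and a block decomposition (or your Arzel\`a--Ascoli argument, run uniformly over initial data in the compact set $\T^n\times\bar B$ by a contradiction argument) gives uniform closeness of the true slow trajectory to the averaged one on $[0,c_2]$; note that your limit equation uses exactly the hypothesis $|\bar f_\varepsilon-\bar f|_{C^1}\to0$. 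What the paper's route buys is quantitative control ($\varepsilon_0$ expressed through $\Delta_{\tilde\alpha}$) and reuse of machinery needed for the other theorems; what yours buys is elementarity and lower regularity requirements, at the price of a purely qualitative threshold $\varepsilon_0$ and of concentrating all the real work in the averaging lemma, which you only sketch and which should be stated with the uniformities just mentioned, since it carries the whole proof.

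Two repairs are needed in the write-up. First, a cosmetic one: the paper uses the sup norm, so passing from the projection onto $-w/|w|$ to $|I_{\mathrm{av}}(c_2)-I_0|$ costs a dimensional constant. Second, and more substantively, your parenthetical ``shrinking $c_1$ relative to $c_2$ so that the averaged arc stays inside $B$'' goes the wrong way: shrinking $c_1$ enlarges $B-c_1$ and does nothing for containment, while the excursion of the averaged arc is of size comparable to $\delta_0 c_2$, which exceeds your $c_1=\delta_0 c_2/2$; so for initial actions close to $\partial B$ the arc (and the true orbit) may leave $B$ before slow time $c_2$. Either let the margin constant differ from the drift constant (prove the statement for $I_0\in B-Kc_2$ with a suitable $K$, keeping the drift lower bound $c_1$), or argue that a solution leaving $B$ before time $\tau$ has already drifted by at least the distance of $I_0$ to $\partial B$. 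The paper's one-line reduction of Theorem 1' is silent on the same bookkeeping, so this is a matter of stating the constants carefully rather than a flaw of your approach, but as written the remark is incorrect.
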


The proof of Theorem 1' will be just a slight modification of the proof of Theorem~\ref{thm1}.

\subsection{Non-resonant case}

Our next results deal with a non-resonant frequency $\alpha \in \R^n$. Let us define the functions $\Psi_{\alpha}$ by
\begin{equation*}
\Psi_{\alpha}(Q)=\max\left\{|k\cdot\alpha|^{-1} \; | \; k\in\Z^{n}, \; 0<|k|\leq Q \right\}, \quad Q \geq 1
\end{equation*} 
and $\Delta_{\alpha}$ by
\begin{equation*}
\Delta_{\alpha}(x)=\sup\{Q \geq 1\; | \; Q\Psi_{\alpha}(Q)\leq x\}, \quad x \geq 1.
\end{equation*} 
The vector $\alpha$ is said to be $(\gamma,\tau)$-Diophantine for some $\gamma>0$ and $\tau \geq n-1$ if $\Psi_{\alpha}(Q)\leq \gamma^{-1} Q^\tau$: in this case, one has $\Delta_{\alpha}(x) \geq (\gamma x)^{\frac{1}{1+\tau}}$. 

The perturbation $f_\varepsilon: \T^{n} \times B \rightarrow \R$ will be assumed, for simplicity, to be real-analytic but this can be weakened (see the comments after Theorem~\ref{thm2} and Theorem~\ref{thm3}). Given two parameters $r>0$ and $s>0$, let us define the complex neighborhoods
\[  V_s\T^n:=\{\theta \in \C^n / \Z^n \; | \; \max_{1 \leq i \leq n}\{|\mathrm{Im}(\theta_i)|\}<s\}, \quad V_rB:=\{I \in \C^n \; | \; \inf_{I' \in B}|I-I'|<r\}, \]
and for simplicity, let us denote
\[ V_{s,r}:=V_s\T^n \times V_rB.  \]
Our assumption is that $f_\varepsilon$ extends to a bounded holomorphic function on $V_{s,r}$, and let us define
\[ |f_\varepsilon|_{s,r}:=\sup_{(\theta,I) \in V_{s,r}}|f_\varepsilon(\theta,I)|. \]
Let us now define the (full) average of $f_\varepsilon$, that we still denote by $\bar{f}_\varepsilon$, by setting
\begin{equation}\label{ave}
\bar{f}_\varepsilon(I):=\int_{\T^{n}}f_\varepsilon(\theta,I)d\theta.
\end{equation}
As before, let $\bar{f}=\bar{f}_0$ and we consider the following condition:

\bigskip

$(G2)$ There exists $I_* \in B$ such that the Hessian matrix $\nabla^2 \bar{f}(I_*)$ is non-singular.

\bigskip

The set of functions $f$ which satisfy $(G2)$ is obviously open and dense. We are finally led to consider the following Hamiltonian:
\begin{equation}\label{Ham2}
\begin{cases}\tag{$H_{nonres}$}
H(\theta,I)=\alpha\cdot I+\varepsilon f_\varepsilon(\theta,I), \quad \varepsilon \geq 0, \quad f_0=f, \\
\alpha \in \R^{n} \; \mathrm{nonresonant}, \quad |\alpha|=1,  \\
|f_\varepsilon|_{s,r}\leq 1. 
\end{cases}
\end{equation} 

We can now state our second result.

\begin{theorem}\label{thm2}
Let $H$ be as in~\eqref{Ham2}, with $f$ satisfying $(G2)$ and 
\[ \lim_{\varepsilon \rightarrow 0}|\nabla^2 \bar{f}_\varepsilon(I_*)-\nabla^2 \bar{f}(I_*)|=0.\] 
Then there exist positive constant $\bar{\varepsilon}_0$ $c$, $\bar{c}$, $\bar{C}$ and an open ball $B_* \subset B$ which depend only on $h$ and $f$ but not on $\varepsilon$, such that if $0<\varepsilon \leq \bar{\varepsilon}_0$, there exists a set $\mathcal{K}_\varepsilon \subset \T^n \times B_*$, which consists of real-analytic Lagrangian quasi-periodic tori invariant by the Hamiltonian flow of $H$. Moreover, we have the measure estimate
\[ \mathrm{Leb}(\T^n \times B_* \setminus \mathcal{K}_\varepsilon) \leq \bar{C}\exp\left(-\bar{c}\Delta_{\alpha}\left(c\varepsilon^{-1}\right)\right)\mathrm{Leb}(\T^n \times B_*) \]
where $\mathrm{Leb}$ denotes Lebesgue measure.
\end{theorem}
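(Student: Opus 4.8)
The plan is to reduce, by an averaging (Nekhoroshev-type) normal form adapted to the non-resonant vector $\alpha$, to a perturbation of a non-degenerate but $\varepsilon$-dependent (hence ``singular'') integrable system, and then to run a quantitative KAM scheme on it with the arithmetic parameters tuned as functions of $\varepsilon$. For the normal form I would eliminate Fourier modes of order up to $\sim\Delta_\alpha(c\varepsilon^{-1})$, controlling small divisors through $\Psi_\alpha$ as in \cite{Bou12}; for $\varepsilon$ small this gives a real-analytic, $\varepsilon$-close-to-identity symplectic embedding $\Phi_\varepsilon\colon V_{s/2,r/2}\to V_{s,r}$ with
\[ H\circ\Phi_\varepsilon(\theta,I)=\alpha\cdot I+\varepsilon g_\varepsilon(I)+\varepsilon R_\varepsilon(\theta,I),\qquad |R_\varepsilon|_{s/2,r/2}\le\exp(-c'\Delta_\alpha(c\varepsilon^{-1})), \]
where $g_\varepsilon$ is real-analytic and $\nabla^2 g_\varepsilon\to\nabla^2\bar f$ uniformly on compact subsets as $\varepsilon\to0$ --- here the hypothesis $\lim_{\varepsilon\to0}|\nabla^2\bar f_\varepsilon(I_*)-\nabla^2\bar f(I_*)|=0$ is what lets one absorb the $\varepsilon$-dependence of $f_\varepsilon$ itself. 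By $(G2)$ and continuity one then fixes an open ball $B_*\ni I_*$ with $\bar B_*\subset B$ and constants $0<m\le M$ so that the integrable part $n_\varepsilon:=\alpha\cdot I+\varepsilon g_\varepsilon$ obeys $|\nabla^2 n_\varepsilon|\le M\varepsilon$ and $|(\nabla^2 n_\varepsilon)^{-1}|\le(m\varepsilon)^{-1}$ on $\bar B_*$ for all $\varepsilon\le\bar\varepsilon_0$; in particular the frequency map $\omega_\varepsilon:=\nabla n_\varepsilon=\alpha+\varepsilon\nabla g_\varepsilon$ is a diffeomorphism of $B_*$ with $|\det D\omega_\varepsilon|$ comparable to $\varepsilon^n$, whose image lies in a ball $B(\alpha,C\varepsilon)$ of radius of order $\varepsilon$.

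For the KAM step I would apply a quantitative theorem to $n_\varepsilon+\varepsilon R_\varepsilon$ on $\T^n\times B_*$ in frequency-parametrized form, using the non-degeneracy of $\omega_\varepsilon$ to treat the frequency $\omega$ as an external parameter: for each $\omega$ in the Cantor set $\{\omega\in\omega_\varepsilon(B_*):|k\cdot\omega|\ge\gamma|k|^{-\tau}\ \text{for all }k\in\Z^n\setminus\{0\}\}$ one gets a real-analytic Lagrangian invariant torus of frequency $\omega$, under a smallness condition which, schematically, reads $\varepsilon|R_\varepsilon|_{s/2,r/2}\lesssim\gamma\,(s/2)^{a\tau}$ (with further universal smallness requirements, and fixed factors depending on $s,r,m,M$ absorbed). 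The decisive feature is that the dependence on $\gamma$ is \emph{linear}: since the available frequencies fill only an $\varepsilon$-ball, one is forced to take $\gamma\lesssim\varepsilon$, and a quadratic dependence would leave no room. I would choose $\gamma=\varepsilon\exp(-\bar c\,\Delta_\alpha(c\varepsilon^{-1}))$ and $\tau=n-1+\lceil\bar c\,\Delta_\alpha(c\varepsilon^{-1})/\log\Delta_\alpha(c\varepsilon^{-1})\rceil$; for this $\tau$ one has $(s/2)^{a\tau}=\exp(-O(\Delta_\alpha(c\varepsilon^{-1})/\log\Delta_\alpha(c\varepsilon^{-1})))$, still much larger than $|R_\varepsilon|_{s/2,r/2}$ once $\Delta_\alpha(c\varepsilon^{-1})$ is large, so the smallness condition holds for $\varepsilon\le\bar\varepsilon_0$. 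The set $\mathcal K_\varepsilon$ is then the union of the $\Phi_\varepsilon$-images of these tori, after a harmless shrinking of $B_*$ to accommodate $\Phi_\varepsilon$.

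The measure estimate follows by transporting back: $\Phi_\varepsilon$ being symplectic and $\varepsilon$-close to the identity, and $\omega_\varepsilon$ having Jacobian comparable to $\varepsilon^n$, the relative measure of $\T^n\times B_*\setminus\mathcal K_\varepsilon$ equals, up to a fixed factor, that of $\omega_\varepsilon(B_*)$ with the above Cantor set removed. For $k\ne0$ the slab $\{|k\cdot\omega|<\gamma|k|^{-\tau}\}$ can meet $B(\alpha,C\varepsilon)$ only if $|k\cdot\alpha|\lesssim\varepsilon|k|$, which by the definition of $\Delta_\alpha$ forces $|k|\gtrsim\Delta_\alpha(c\varepsilon^{-1})$; summing the volumes of these slabs --- a convergent sum because $\tau>n-1$ --- gives a bad set of relative measure $\lesssim\gamma\varepsilon^{-1}\Delta_\alpha(c\varepsilon^{-1})^{\,n-1-\tau}$, which with the chosen $\gamma,\tau$ is $\lesssim\exp(-\bar c\,\Delta_\alpha(c\varepsilon^{-1}))$. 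Renaming constants yields the stated bound.

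The step I expect to be the real obstacle is precisely the singular character of the reduced problem. Averaging produces a non-degeneracy of size only $\varepsilon$, so the KAM tori cannot realize frequencies outside an $\varepsilon$-neighbourhood of the fixed vector $\alpha$, and inside such a small ball a fixed proportion of frequencies violate any Diophantine condition whose constant is of order $\varepsilon$; one therefore cannot simply take $\gamma$ as small as one likes. Reconciling the upper bound $\gamma\lesssim\varepsilon$ demanded by the measure estimate with the lower bound on $\gamma$ demanded by KAM smallness is possible only because (i) the normal form remainder has been pushed below $\exp(-c'\Delta_\alpha(c\varepsilon^{-1}))$, which requires a genuine Nekhoroshev-type iteration governed by $\Psi_\alpha$ rather than a single averaging step, and (ii) one is allowed to let the Diophantine exponent $\tau$ grow like $\Delta_\alpha(c\varepsilon^{-1})/\log\Delta_\alpha(c\varepsilon^{-1})$, which makes the excluded frequencies exponentially rare while degrading the smallness threshold only sub-exponentially in $\Delta_\alpha(c\varepsilon^{-1})$. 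Checking that these two refinements indeed close the estimates --- in particular tracking how the non-degeneracy constants $m$, $M\varepsilon$ and the analyticity radii enter the KAM smallness, and how $\Phi_\varepsilon$ and $\omega_\varepsilon$ distort Lebesgue measure --- is the technical heart of the proof.
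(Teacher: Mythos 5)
Your first step (the Nekhoroshev-type averaging with exponentially small remainder governed by $\Psi_\alpha$) is exactly the paper's Proposition~\ref{normal2}, but from there your route diverges genuinely, and it is much heavier than what the paper actually does. The paper never confronts the singular, small-twist problem that occupies most of your effort: it simply rescales, replacing $H\circ\Phi$ by $\bar{H}_\varepsilon:=\varepsilon^{-1}H\circ\Phi$, whose integrable part $\varepsilon^{-1}\alpha\cdot I+\bar{f}_\varepsilon(I)+g_\varepsilon(I)$ has Hessian $\nabla^2\bar{f}_\varepsilon+\nabla^2 g_\varepsilon$ --- of order one and invertible near $I_*$ by $(G2)$, the convergence hypothesis and a Cauchy estimate on $g_\varepsilon$. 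The huge linear term is irrelevant for Kolmogorov non-degeneracy, so the classical KAM theorem (Theorem~A of \cite{Pos82}) applies as a black box, with perturbation $\nu\sim\exp\left(-c'\Delta_\alpha(c\varepsilon^{-1})\right)$ and with threshold and constants depending only on $n,s,r,M$; it gives relative excluded measure $\lesssim\sqrt{\nu}$, the constant time change does not affect invariance of the tori, and one transports back by $\Phi$. In particular no frequency-parametrized KAM, no hand-tuned $(\gamma,\tau)$, and no resonance counting via $\Psi_\alpha$ is needed: the arithmetic of $\alpha$ enters only through the normal form, while the exponential smallness of the measure estimate comes solely from the exponential smallness of the remainder. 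Your unrescaled scheme --- twist of size $\varepsilon$, frequencies confined to a $C\varepsilon$-ball around $\alpha$, $\gamma\lesssim\varepsilon$, a linear-in-$\gamma$ smallness condition, and exclusion only of modes $|k|\gtrsim\Delta_\alpha(c\varepsilon^{-1})$ --- is coherent (and your remark that a quadratic $\gamma$-dependence would fail is correct in that setup for Liouville $\alpha$, where $\exp(-c'\Delta_\alpha(c\varepsilon^{-1}))$ need not be smaller than any power of $\varepsilon$), but it obliges you to carry out a covering/parametrization argument with $\varepsilon$-dependent non-degeneracy; that is exactly the technical debt the rescaling dissolves in one line.

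One specific choice in your scheme deserves a concrete warning: letting $\tau$ grow like $\Delta_\alpha(c\varepsilon^{-1})/\log\Delta_\alpha(c\varepsilon^{-1})$ is both unnecessary and risky. The constants in quantitative KAM statements typically grow factorially in $\tau$ (from sums such as $\sum_{k\neq0}|k|^{\tau}e^{-|k|s/2}$), and $\tau!$ with your choice of $\tau$ is $\exp\left((1+o(1))\,\tau\log\tau\right)=\exp\left((1+o(1))\,\Delta_\alpha(c\varepsilon^{-1})\right)$, which is genuinely exponential in $\Delta_\alpha(c\varepsilon^{-1})$, not the sub-exponential degradation you claim; closing the smallness condition would then require the normal-form constant $c'$ to dominate a constant you do not control. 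Since you already take $\gamma=\varepsilon\exp\left(-\bar{c}\,\Delta_\alpha(c\varepsilon^{-1})\right)$, a fixed $\tau>n-1$ suffices both for the smallness condition (choosing $\bar{c}$ small compared to $c'$) and for the measure bound $\gamma\varepsilon^{-1}\Delta_\alpha(c\varepsilon^{-1})^{\,n-1-\tau}\leq\exp\left(-\bar{c}\,\Delta_\alpha(c\varepsilon^{-1})\right)$, so the growing $\tau$ should simply be dropped.
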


The conclusions of this statement are similar to those of the classical KAM theorem (see for instance \cite{Pos82} or \cite{Pos01}): the proof will actually consist of using the condition $(G2)$ on the perturbation to remove the degeneracy on the integrable part. 

Observe that the measure estimate of the complement of the set of invariant tori depends on $\alpha$, as it is exponentially small with respect to $\Delta_{\alpha}(\varepsilon^{-1})$; when $\alpha$ is $(\gamma,\tau)$-Diophantine, the estimate is of order $\exp\left(-(\gamma/\varepsilon)^{\frac{1}{1+\tau}}\right)$. The analyticity assumption is in fact not necessary; one may assume that the system is only Gevrey regular (see \cite{Pop04}) or sufficiently smooth. In those cases, the statement is essentially the same, with the obvious modifications (concerning the regularity of each torus and the measure estimate) so we will not pursue this further.

\bigskip

Observe also, as before, that our condition $(G2)$ is local, so Theorem~\ref{thm2} only applies to some (possibly very small) open ball $B_*$. To have a global statement, one can consider the following condition:

\bigskip

$(G2')$ $\nabla \bar{f} : \bar{B} \rightarrow \R^n$ is a diffeomorphism onto its image.

\bigskip

Notice, however, that this condition $(G2')$ is no longer generic (one has the same issue for the classical KAM theorem on a fixed domain). Under such a condition, we have the following global statement.

\begin{thmb}\label{thm2'}
Let $H$ be as in~\eqref{Ham2}, with $f$ satisfying $(G2')$ and 
\[ \lim_{\varepsilon \rightarrow 0}|f_\varepsilon-f|_{C^2(\bar{B})}=0.\] 
Then there exist positive constant $\bar{\varepsilon}_0$ $c$, $\bar{c}$ and $\bar{C}$ which depend only on $h$ and $f$ but not on $\varepsilon$, such that if $0<\varepsilon \leq \bar{\varepsilon}_0$, there exists a set $\mathcal{K}_\varepsilon \subset \T^n \times B$, which consists of real-analytic Lagrangian quasi-periodic tori invariant by the Hamiltonian flow of $H$. Moreover, we have the measure estimate
\[ \mathrm{Leb}(\T^n \times B \setminus \mathcal{K}_\varepsilon) \leq \bar{C}\exp\left(-\bar{c}\Delta_{\alpha}\left(c\varepsilon^{-1}\right)\right)\mathrm{Leb}(\T^n \times B) \]
where $\mathrm{Leb}$ denotes Lebesgue measure.
\end{thmb}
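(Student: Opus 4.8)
The plan is to use condition $(G2')$ to replace the degenerate integrable part $\alpha\cdot I$ by a genuinely Kolmogorov nondegenerate one — at the cost of a twist of size only $\varepsilon$ — and then to invoke a quantitative analytic KAM theorem; Theorem~2' is the global version of Theorem~\ref{thm2}, carried out with $(G2')$ in place of $(G2)$ so that the relevant change of variables is available on all of $\bar B$ rather than on a small ball $B_*$. First, since $\alpha$ is non-resonant, I would iterate the arithmetically adapted averaging procedure of \cite{Bou12} relative to $\alpha$ on slightly shrunk complex domains, obtaining on $V_{s/2,r/2}$ a real-analytic symplectic transformation $\Phi_\varepsilon$, tending to the identity in the $C^1$ topology as $\varepsilon\to 0$, with
\[ H\circ\Phi_\varepsilon(\theta,I)=\alpha\cdot I+\varepsilon a_\varepsilon(I)+\varepsilon g_\varepsilon(\theta,I), \qquad |g_\varepsilon|_{s/2,r/2}\leq \exp\bigl(-\bar c\,\Delta_\alpha(c\varepsilon^{-1})\bigr), \]
where $a_\varepsilon\to \bar f$ in the $C^2$ topology on $\bar B$. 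Taking the number of averaging steps of order $\Delta_\alpha(c\varepsilon^{-1})$ is precisely what balances the small divisors $|k\cdot\alpha|^{-1}$, $|k|\leq N$, against the loss of analyticity width, and this is the same mechanism that produces the exponential stability estimates of \cite{Bou12}.

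Next I would divide the Hamiltonian by $\varepsilon$ (a mere time rescaling), reducing the problem to $n_\varepsilon(I)+g_\varepsilon(\theta,I)$ with $n_\varepsilon(I):=\varepsilon^{-1}\alpha\cdot I+a_\varepsilon(I)$. Since $(G2')$ asserts that $\nabla\bar f$ is a diffeomorphism of $\bar B$ onto its image and $|f_\varepsilon-f|_{C^2(\bar B)}\to 0$ gives $a_\varepsilon\to\bar f$ in $C^2(\bar B)$, for $\varepsilon$ small the frequency map $I\mapsto\varepsilon^{-1}\alpha+\nabla a_\varepsilon(I)$ is still a diffeomorphism of $\bar B$ onto its image and $\nabla^2 n_\varepsilon=\nabla^2 a_\varepsilon=\nabla^2\bar f+o(1)$ is uniformly invertible on $\bar B$; that is, $n_\varepsilon$ is Kolmogorov nondegenerate uniformly in $\varepsilon$ (the linear term $\varepsilon^{-1}\alpha\cdot I$ only translates the frequency map and is irrelevant here), while the perturbation $g_\varepsilon$ is exponentially small in $\Delta_\alpha(c\varepsilon^{-1})$. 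Observe also that, $(G2')$ forcing $\nabla^2\bar f$ to be nonsingular on the compact set $\bar B$, the map $\nabla\bar f$ is in fact a diffeomorphism on a fixed open neighborhood $\tilde B\supseteq\bar B$, and so is $\nabla a_\varepsilon$ for $\varepsilon$ small; I would work over $\tilde B$ so that the distortion by $\Phi_\varepsilon$ does not spoil the covering of $\T^n\times B$ in the end.

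Then I would apply a quantitative analytic KAM theorem under the Kolmogorov nondegeneracy condition (e.g.\ \cite{Pos82}, \cite{Pos01}), whose conclusion includes a bound of the form $C\sqrt{|g_\varepsilon|}$ on the relative measure of the complement of the persisting tori, with $C$ depending only on the (now fixed) nondegeneracy of $n_\varepsilon$ and on $s,r$. This produces a set $\tilde{\mathcal K}_\varepsilon\subset\T^n\times\tilde B$ of real-analytic Lagrangian quasi-periodic tori invariant by the flow of $n_\varepsilon+g_\varepsilon$, hence of $H\circ\Phi_\varepsilon$, with complement of measure $\leq \bar C\exp\bigl(-\tfrac{\bar c}{2}\Delta_\alpha(c\varepsilon^{-1})\bigr)$ relative to $\T^n\times\tilde B$; the global diffeomorphism property of $I\mapsto\varepsilon^{-1}\alpha+\nabla a_\varepsilon(I)$ on $\tilde B$ is what lets the Diophantine frequencies selected by KAM pull back to a nearly full-measure subset of $\tilde B$, and the domain has been shrunk only in the imaginary directions so no real collar is lost. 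Finally $\mathcal K_\varepsilon:=\Phi_\varepsilon(\tilde{\mathcal K}_\varepsilon)\cap(\T^n\times B)$ consists of real-analytic Lagrangian tori invariant by the flow of $H$; since $\Phi_\varepsilon$ is symplectic and converges to the identity, $\Phi_\varepsilon(\T^n\times\tilde B)\supseteq\T^n\times B$ for $\varepsilon$ small and $\Phi_\varepsilon$ distorts Lebesgue measure by $1+o(1)$, so one obtains
\[ \mathrm{Leb}\bigl(\T^n\times B\setminus\mathcal K_\varepsilon\bigr)\leq \bar C\exp\bigl(-\bar c\,\Delta_\alpha(c\varepsilon^{-1})\bigr)\,\mathrm{Leb}(\T^n\times B) \]
after renaming the constants.

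The hard part, and essentially the only step requiring genuine work, is the averaging: obtaining a remainder that is exponentially small in $\Delta_\alpha(c\varepsilon^{-1})$ — rather than merely $O(\varepsilon^\infty)$ — forces one to use the arithmetically adapted normal form of \cite{Bou12}, tracking carefully how the small divisors $|k\cdot\alpha|^{-1}$ accumulate through the iteration. The one conceptual subtlety elsewhere is that the nondegeneracy recovered from $(G2')$ is only of size $\varepsilon$; this is harmless, because after the rescaling by $\varepsilon$ the perturbation has size $\exp(-\bar c\,\Delta_\alpha(c\varepsilon^{-1}))$, which beats any power of $\varepsilon$, so the hypotheses of the classical KAM theorem are met with room to spare.
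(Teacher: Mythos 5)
Your proposal is correct and follows essentially the same route as the paper: the resonant-free averaging normal form with remainder exponentially small in $\Delta_\alpha(c\varepsilon^{-1})$ (the paper's Proposition~\ref{normal2}, quoted from \cite{Bou13a}), division by $\varepsilon$, uniform Kolmogorov nondegeneracy of the $\varepsilon$-dependent integrable part obtained from $(G2')$, the $C^2$ convergence and compactness of $\bar B$, then P\"oschel's KAM theorem with its $\sqrt{\nu}$ measure estimate, and finally transport of the tori and of the measure estimate by the near-identity symplectic conjugacy. Your extra care with a slightly enlarged domain $\tilde B$ to absorb the $O(\mu_\alpha(\varepsilon))$ displacement of $\Phi_\varepsilon$ is a reasonable way to make precise a point the paper leaves implicit, but it does not constitute a different argument.
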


As before, the proof of Theorem 2' will be easily obtained from the proof of Theorem~\ref{thm2}.

\bigskip

To state our last result, we need to introduce several notations and a definition. Given an integer $m \geq 2$, let us denote by $P(n,m)$ the space of polynomials with real coefficients of degree $m$ in $n$ variables, and $P_2(n,m)\subset P(n,m)$ the subspace of polynomials with vanishing homogeneous terms of degree zero and one. The following definition was introduced in~\cite{BFN15}, following the work of Nekhoroshev (\cite{Nek73}).  

\begin{definition}\label{stabsteep} 
Given positive constants $\rho$, $C$ and $\delta$, a polynomial $P_0 \in P_2(n,m)$ is called $(\rho,C,\delta)$-stably steep if for any $P \in P_2(n,m)$ such that $|P-P_0|<\rho$, any integer $l\in [1,n-1]$, and any vector subspace $\Lambda \subseteq \R^n$ of dimension $l$, letting $P_\Lambda$ be the restriction of $P$ to $\Lambda$, the inequality
\[ \max_{0 \leq \eta \leq \zeta}\;\min_{|x|=\eta, \; x \in \Lambda}|\nabla P_\Lambda(x)|>C\zeta^{m-1} \]
holds true for all $0 < \zeta \leq\delta$. A polynomial $P_0 \in P_2(n,m)$ is called stably steep if it is $(\rho,C,\delta)$-stably steep for some positive constants $\rho$, $C$ and $\delta$.  
\end{definition} 

Recall that the average perturbation $\bar{f}_\varepsilon : B \rightarrow \R$ was defined in~\eqref{ave}, and that $\bar{f}=\bar{f}_0$. Given $I \in B$ and $m \geq 2$, we let $T_2^m\bar{f}(I) \in P_2(n,m)$ be the polynomial obtained by selecting the homogeneous terms of degree at least $2$ and at most $m$ from the Taylor expansion of $\bar{f}$ at $I$. Formally, we have
\[ T_2^m\bar{f}(I)X=\sum_{l=2}^{m}(l!)^{-1}\nabla^l \bar{f}(I).(X)^l, \quad X=(X_1,\dots,X_n).\]
Let us further define $m_n:=[n^2/2+2]$, where $[\,\cdot\,]$ denotes the integer part, so that we can finally state the condition we shall require on $\bar{f}$:

\bigskip

$(G3)$ There exists $I_* \in B$ and an integer $m \geq m_n$ such that the polynomial $T_2^{m}\bar{f}(I_*) \in P_2(n,m)$ is stably steep.

\bigskip

It was proved in~\cite{BFN15} (using arguments from~\cite{Nek73}) that the set of stably steep polynomials is, for $m \geq m_n$, generic in the sense that its complement is a closed semi-alegebraic subset of positive codimension. The set of functions $f$ which satisfy $(G3)$ is therefore open and dense. We can now state our third result.

\begin{theorem}\label{thm3}
Let $H$ be as in~\eqref{Ham2}, with $f$ satisfying $(G3)$ and 
\[ \lim_{\varepsilon \rightarrow 0}\left(\max_{2 \leq l \leq m}|\nabla^l \bar{f}_\varepsilon(I_*)-\nabla^l \bar{f}(I_*)|\right)=0.\] 
Then there exist positive constant $\tilde{\varepsilon}_0$, $c$, $\tilde{c}$, $\tilde{C}$ and an open ball $B_* \subset B$ which depend only on $h$ and $f$ but not on $\varepsilon$, such that if $0<\varepsilon \leq \tilde{\varepsilon}_0$, for any $I_0 \in B_*$ and any solution $(\theta(t),I(t))$ of the system associated to $H$ with $I(0)=I_0$, we have
\[ |I(t)-I_0|\leq \tilde{C}\Delta_{\alpha}(c\varepsilon^{-1})^{-1}, \quad |t|\leq \varepsilon^{-1}\exp\exp\left(-\tilde{c}\Delta_{\alpha}(c\varepsilon^{-1})\right). \]
\end{theorem}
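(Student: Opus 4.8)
The plan is to use a non-resonant averaging to transfer the $\varepsilon$-dependence of the perturbation into an effective, \emph{singular} integrable part, then to exploit the stable steepness hypothesis $(G3)$ to run a Nekhoroshev-type argument for this effective system, whose now exponentially small perturbation upgrades the usual exponential stability time to a double exponential. Stage~1 is the normal form, Stage~2 is the steepness analysis (the heart of the matter), Stage~3 is the Nekhoroshev estimate and the bookkeeping.

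\textbf{Step 1: non-resonant normal form.} Since $\alpha$ is non-resonant, I would run the iterated averaging procedure underlying~\cite{Bou12}, eliminating the Fourier modes $0<|k|\le K$ with $K:=\Delta_\alpha(c\varepsilon^{-1})$ and iterating of order $K$ times; the new point is that one retains the transformed Hamiltonian rather than only the resulting estimates. On $V_{s/2,r/2}$ this produces a symplectic map $\Phi_\varepsilon$ with $|\Phi_\varepsilon-\mathrm{id}|\lesssim \varepsilon\Psi_\alpha(K)\lesssim \Delta_\alpha(c\varepsilon^{-1})^{-1}$ (the $I$-component of the displacement being of this size) such that
\[ H\circ\Phi_\varepsilon=\alpha\cdot I+\varepsilon g_\varepsilon(I)+\varepsilon h^*_\varepsilon(\theta,I),\qquad |h^*_\varepsilon|_{s/2,r/2}\le \exp\bigl(-\bar c\,\Delta_\alpha(c\varepsilon^{-1})\bigr), \]
with $g_\varepsilon$ as close as we like to $\bar f_\varepsilon$ in $C^m(\bar B_*)$ for $\varepsilon$ small; in particular, by the hypothesis on the $\nabla^l\bar f_\varepsilon(I_*)$, the Taylor polynomial $T_2^m g_\varepsilon(I)$ converges to $T_2^m\bar f(I_*)$ uniformly for $I$ in a small ball around $I_*$. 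Rescaling time by $\varepsilon$, it is equivalent to study the flow of $\hat H:=\varepsilon^{-1}\alpha\cdot I+g_\varepsilon(I)+h^*_\varepsilon(\theta,I)$, that is, a perturbation of size $\delta:=\exp(-\bar c\,\Delta_\alpha(c\varepsilon^{-1}))$ of the integrable Hamiltonian $\hat h_\varepsilon(I):=\varepsilon^{-1}\alpha\cdot I+g_\varepsilon(I)$.

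\textbf{Step 2: steepness of the effective integrable part.} I claim that on a fixed small ball $B_*\ni I_*$ the Hamiltonian $\hat h_\varepsilon$ is steep with all steepness indices equal to $m-1$ and with steepness coefficient and radius uniform in $\varepsilon$. The key algebraic lemma to establish is: if $P_0\in P_2(\ell,m)$ is $(\rho,C,\delta)$-stably steep, then there are constants $C',\delta'>0$ depending only on $\rho,C,\delta,\ell,m$ such that for every $\lambda\in\R^\ell$ and every $P\in P_2(\ell,m)$ with $|P-P_0|<\rho$, the germ $x\mapsto \lambda\cdot x+P(x)$ is steep at the origin on the ball of radius $\delta'$, with all steepness indices equal to $m-1$ and steepness coefficient $C'$. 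Its proof splits the scale $\zeta$ appearing in the steepness inequality into two regimes: when $\zeta$ is small compared with a suitable power of $|\lambda|$, the identity $\nabla P(0)=0$ gives $|\nabla(\lambda\cdot x+P(x))|\ge |\lambda|-O(\zeta)\gg C'\zeta^{m-1}$; when $\zeta$ is larger one has $|\lambda|\lesssim \zeta^{m-1}$, so $\lambda\cdot x$ is a genuinely small perturbation at the very scale $\zeta^{m-1}$ at which stable steepness of $P$ is measured, and the steepness inequality survives with a smaller constant. To conclude Step~2, fix a base point $I\in B_*$, set $P:=T_2^m g_\varepsilon(I)\in P_2(n,m)$ and $\lambda:=\varepsilon^{-1}\alpha+\nabla g_\varepsilon(I)$, so that $\hat h_\varepsilon(I+x)=\mathrm{const}+\lambda\cdot x+P(x)+O(|x|^{m+1})$; by $(G3)$ and Step~1, $P$ lies within $\rho$ of the stably steep polynomial $T_2^m\bar f(I_*)$ once $\varepsilon$ and the radius of $B_*$ are small enough, so the lemma applies on every linear subspace, while the degree-$>m$ remainder, being $O(|x|^{m+1})$, is negligible against $\zeta^{m-1}$ on a small enough ball.

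\textbf{Step 3: Nekhoroshev estimate and conclusion.} I would apply the Nekhoroshev theorem for steep integrable Hamiltonians to $\hat H=\hat h_\varepsilon+h^*_\varepsilon$ on $\T^n\times B_*$, in a version whose smallness threshold and constants depend only on the steepness data and on the derivatives $\nabla^l\hat h_\varepsilon$ with $l\ge2$ — which are $O(1)$, since the large linear term $\varepsilon^{-1}\alpha\cdot I$ affects only $\nabla\hat h_\varepsilon$ — and not on $\sup|\nabla\hat h_\varepsilon|$. As the steepness data is uniform in $\varepsilon$ (Step~2) and $\delta=\exp(-\bar c\,\Delta_\alpha(c\varepsilon^{-1}))$ is below every threshold for $\varepsilon$ small, one gets exponents $a,b>0$ depending only on $n$ and $m$ such that every orbit issued from a slightly smaller ball stays in $B_*$ and satisfies $|\hat I(\hat t)-\hat I(0)|\le \tilde C_0\,\delta^{b}$ for $|\hat t|\le \exp(\tilde c_0\,\delta^{-a})$. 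Undoing the time rescaling inserts the factor $\varepsilon^{-1}$, and
\[ \exp(\tilde c_0\,\delta^{-a})=\exp\Bigl(\tilde c_0\exp\bigl(a\bar c\,\Delta_\alpha(c\varepsilon^{-1})\bigr)\Bigr)\ge \exp\exp\bigl(\tilde c\,\Delta_\alpha(c\varepsilon^{-1})\bigr) \]
for $\varepsilon$ small; undoing $\Phi_\varepsilon$ and using $|\Phi_\varepsilon-\mathrm{id}|\lesssim \Delta_\alpha(c\varepsilon^{-1})^{-1}$ together with $\delta^{b}\ll \Delta_\alpha(c\varepsilon^{-1})^{-1}$ gives $|I(t)-I_0|\le \tilde C\,\Delta_\alpha(c\varepsilon^{-1})^{-1}$, which is the asserted estimate. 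The main obstacle is Step~2: stable steepness (rather than mere steepness) is indispensable because adding the linear term $\varepsilon^{-1}\alpha\cdot I$ is \emph{not} a small perturbation inside $P_2(n,m)$ — it alters the degree-one part — yet one needs steepness with constants uniform over the whole range $|\lambda|=\varepsilon^{-1}\to\infty$; making the two-scale estimate quantitative with constants genuinely independent of $\lambda$, and checking that the ensuing Nekhoroshev estimate is insensitive to the large frequency of the effective integrable Hamiltonian, is the ``generic perturbation of a singular non-linear integrable system'' phenomenon announced in the introduction.
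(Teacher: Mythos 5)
Your proposal is correct and follows essentially the same route as the paper: your Step~1 is Proposition~\ref{normal2} (quoted in the paper from \cite{Bou13a}), your Step~2 lemma plays exactly the role of Proposition~\ref{propsteep1} (quoted from \cite{BFN15}, \cite{BFN15b}), and your Step~3 is the paper's application of the Nekhoroshev Theorem~\ref{thmN} --- whose constants, as stated there, depend only on the steepness data and the Hessian bound and not on $\sup|\nabla H_0|$, so the large frequency $\varepsilon^{-1}\alpha$ is indeed harmless --- followed by the same time rescaling and composition with the normal-form embedding. The uniformity in $\lambda=\varepsilon^{-1}\alpha$ that you single out as the main obstacle is in fact easier than your two-scale sketch suggests: taking $\eta=0$ in the steepness maximum settles the case $|\Pi_\Lambda\lambda|>(C/2)\zeta^{m-1}$, and otherwise $\Pi_\Lambda\lambda$ is an admissible $(C/2)\zeta^{m-1}$-perturbation of $\nabla P_\Lambda$; moreover, with the paper's Definition~\ref{funcsteep}, which measures gradient differences $|\nabla h_\lambda(I')-\nabla h_\lambda(I)|$, the linear term cancels altogether, which is why the paper obtains steepness of $H_0$ directly from $(G3)$ together with the lower bound $|\nabla H_0(I_*)|\gtrsim\varepsilon^{-1}\gtrsim 1$.
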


Exactly as in Theorem~\ref{thm2}, the real-analyticity assumption is not necessary and can be weakened: if $H$ is only assumed to be $\alpha$-Gevrey smooth, for $\alpha\geq 1$, then one obtains the following estimates:
\[ |I(t)-I_0|\leq \tilde{C}\Delta_{\alpha}(c\varepsilon^{-1})^{-1}, \quad |t|\leq \varepsilon^{-1}\exp\exp\left(-\tilde{c}\Delta_{\alpha}(c\varepsilon^{-1})^{\frac{1}{\alpha}}\right), \]
which generalizes the estimates of Theorem~\ref{thm3} (as the case $\alpha=1$ corresponds to real-analytic Hamiltonians). Once again, we can have a global statement under the following global condition:

\bigskip

$(G3')$ There exists an integer $m \geq m_n$ such that for all $I \in \bar{B}$, the polynomial $T_2^{m}\bar{f}(I) \in P_2(n,m)$ is stably steep.

\bigskip

As we already said, the set of stably steep polynomials with $m \geq m_n$ has a complement with positive codimension, but more is true: when $m$ is arbitrary large, the codimension of the complement gets arbitrary large. Using a transversality argument, one can then infer that condition $(G3')$ is still satisfied for an open and dense set of functions $f$. Here's our global statement.

\begin{thmc}\label{thm3'}
Let $H$ be as in~\eqref{Ham2}, with $f$ satisfying $(G3')$ and 
\[ \lim_{\varepsilon \rightarrow 0}|f_\varepsilon-f|_{C^m(\bar{B})}=0.\] 
Then there exist positive constant $\tilde{\varepsilon}_0$, $c$, $\tilde{c}$ and $\tilde{C}$ which depend only on $h$ and $f$ but not on $\varepsilon$, such that if $0<\varepsilon \leq \tilde{\varepsilon}_0$, for any solution $(\theta(t),I(t))$ of the system associated to $H$ with 
\[ I(0)=I_0 \in B-\tilde{C}\Delta_{\alpha}(c\varepsilon^{-1})^{-1}\] 
we have
\[ |I(t)-I_0|\leq \tilde{C}\Delta_{\alpha}(c\varepsilon^{-1})^{-1}, \quad |t|\leq \varepsilon^{-1}\exp\exp\left(-\tilde{c}\Delta_{\alpha}(c\varepsilon^{-1})\right). \]
\end{thmc}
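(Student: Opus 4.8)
The plan is to deduce Theorem 3' from Theorem~\ref{thm3} by exactly the same kind of localization-and-covering argument that presumably yields Theorem~\ref{thm3} from an $\varepsilon$-dependent Nekhoroshev-type statement for singular integrable systems. The global hypothesis $(G3')$ says that at \emph{every} point $I\in\bar B$ the truncated Taylor polynomial $T_2^m\bar f(I)$ is stably steep; since $\bar B$ is compact and stable steepness is, by Definition~\ref{stabsteep}, an open condition (it involves a strict inequality and an explicit neighborhood of size $\rho$ in $P_2(n,m)$), I would first extract \emph{uniform} constants $\rho,C,\delta>0$ such that $T_2^m\bar f(I)$ is $(\rho,C,\delta)$-stably steep for all $I\in\bar B$. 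This uses continuity of $I\mapsto T_2^m\bar f(I)$ and a standard covering of $\bar B$ by finitely many balls; the convergence hypothesis $\lim_{\varepsilon\to 0}|f_\varepsilon-f|_{C^m(\bar B)}=0$ then guarantees that $T_2^m\bar f_\varepsilon(I)$ is, say, $(\rho/2,C,\delta)$-stably steep for all $I\in\bar B$ and all $\varepsilon\le\tilde\varepsilon_0$, uniformly. In particular the local hypothesis of Theorem~\ref{thm3} holds at every $I_*\in\bar B$ with $\varepsilon$-independent constants.

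Next I would run the proof of Theorem~\ref{thm3} centered at each $I_*\in\bar B$: it produces, for each such center, an open ball $B_*=B_*(I_*)$ and constants $\tilde\varepsilon_0,c,\tilde c,\tilde C$ which, by the uniformity just obtained, can be chosen independent of $I_*$ (they depend only on $n$, on the uniform stable-steepness constants $\rho,C,\delta$, and on the arithmetic function $\Delta_\alpha$). The radius of $B_*(I_*)$ can likewise be taken to be a fixed $r_*>0$ independent of $I_*$. The balls $\{B(I_*,r_*)\}_{I_*\in \bar B}$ cover $\bar B$, and the stability conclusion of Theorem~\ref{thm3} on each such ball is a \emph{confinement} statement: a solution starting in the concentric ball of half the radius, say, stays within $\tilde C\Delta_\alpha(c\varepsilon^{-1})^{-1}$ of its initial action for $|t|\le \varepsilon^{-1}\exp\exp(-\tilde c\Delta_\alpha(c\varepsilon^{-1}))$. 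Since $\Delta_\alpha(c\varepsilon^{-1})\to\infty$ as $\varepsilon\to0$, for $\varepsilon$ small the drift bound $\tilde C\Delta_\alpha(c\varepsilon^{-1})^{-1}$ is much smaller than $r_*$, so a solution that starts in $B-\tilde C\Delta_\alpha(c\varepsilon^{-1})^{-1}$ lies, at time $0$, in some ball $B(I_*,r_*/2)$, and the confinement estimate of the local theorem applies verbatim on the relevant time interval before the action could possibly leave that ball. One has to check that the solution does not reach the boundary of $B(I_*,r_*)$ during the allotted time — but this is immediate from the drift bound itself, which is the usual a~priori/a~posteriori bootstrap: as long as $I(t)$ stays in $B(I_*,r_*)$ the estimate of Theorem~\ref{thm3} holds, and that estimate forbids $I(t)$ from traveling a distance $r_*/2$, so by continuity it never leaves.

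I would then assemble these pieces: for $0<\varepsilon\le\tilde\varepsilon_0$ (shrinking $\tilde\varepsilon_0$ once more so that $\tilde C\Delta_\alpha(c\varepsilon^{-1})^{-1}<r_*/2$) and any solution with $I(0)=I_0\in B-\tilde C\Delta_\alpha(c\varepsilon^{-1})^{-1}$, pick $I_*$ with $I_0\in B(I_*,r_*/2)$ and invoke the local statement to get $|I(t)-I_0|\le \tilde C\Delta_\alpha(c\varepsilon^{-1})^{-1}$ for $|t|\le \varepsilon^{-1}\exp\exp(-\tilde c\Delta_\alpha(c\varepsilon^{-1}))$, which is precisely the claimed conclusion. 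The requirement $I_0\in B-\tilde C\Delta_\alpha(c\varepsilon^{-1})^{-1}$ in the statement is exactly what is needed to ensure that $B(I_0,\tilde C\Delta_\alpha(c\varepsilon^{-1})^{-1})\subset B$, so the confinement takes place inside the domain of definition of $H$ and the Hamiltonian flow makes sense throughout.

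The only genuinely non-routine point — the rest being the bootstrap and covering bookkeeping just described — is the extraction of uniform stable-steepness constants over the compact set $\bar B$, together with their persistence for $f_\varepsilon$. This is where one must be careful about the order of quantifiers in Definition~\ref{stabsteep} (the constants $\rho,C,\delta$ must work simultaneously for all subspaces $\Lambda$ of all dimensions and all $I\in\bar B$), but since the steepness inequality is strict and the dependence of $T_2^m\bar f(I)$ on $I$ is continuous (indeed $C^0$ in $I$ with values in the finite-dimensional space $P_2(n,m)$), a compactness argument — or, if one prefers to be fully quantitative, a direct estimate using $|f_\varepsilon-f|_{C^m(\bar B)}$ to control $|T_2^m\bar f_\varepsilon(I)-T_2^m\bar f(I)|$ in $P_2(n,m)$ uniformly in $I$ — closes the gap. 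Everything else follows the proof of Theorem~\ref{thm3} with no new ideas, which is why, as the paper says, Theorem 3' is ``easily obtained'' from it.
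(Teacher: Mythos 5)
Your proposal isolates exactly the step the paper treats as the real content of Theorem 3': by compactness of $\bar B$ and openness of stable steepness in $P_2(n,m)$, one extracts uniform constants $(\rho,C,\delta)$ valid at every $I\in\bar B$, and the $C^m$ convergence of $f_\varepsilon$ to $f$ transfers them to $T_2^m\bar f_\varepsilon(I)$ uniformly in $I$ and $\varepsilon$. That is precisely the paper's argument, after which it only says that the rest is ``similar to the proof of Theorem~\ref{thm3}''. You diverge in how you globalize. The paper's intended route is to run the proof of Theorem~\ref{thm3} \emph{once}: with uniform constants, Proposition~\ref{propsteep1} applied at every point makes $H_0$ steep on all of $B$, and Theorem~\ref{thmN} is then invoked a single time with $D=B$, losing only an exponentially thin layer $\nu_\alpha(\varepsilon)^b$ at $\partial D$ --- negligible compared with the layer $\tilde C\Delta_\alpha(c\varepsilon^{-1})^{-1}$ permitted by the statement, which is in any case needed to absorb the $O(\mu_\alpha(\varepsilon))$ displacement of $\Phi$. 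Your alternative --- covering $\bar B$ by balls of uniform radius $r_*$ and invoking the local theorem on each --- is viable but has one soft spot: for $I_*$ within $r_*$ of $\partial B$ there is no ball $B(I_*,r_*)\subset B$, so ``the radius of $B_*(I_*)$ can be taken independent of $I_*$'' fails as stated (the constants in the proof of Theorem~\ref{thm3} explicitly depend on the distance of $I_*$ to $\partial B$), and since $\Delta_\alpha(c\varepsilon^{-1})^{-1}\to 0$ the admissible initial conditions do come arbitrarily close to the boundary. The repair is routine --- apply Theorem~\ref{thmN} on $D=B(I_*,r_*)\cap B$, or simply on $D=B$, and use that $I_0$ is at distance $\tilde C\Delta_\alpha(c\varepsilon^{-1})^{-1}\gg\nu_\alpha(\varepsilon)^b$ from $\partial B$ --- but as written the covering step does not quite close near $\partial B$. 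Everything else, including the quantifier care in Definition~\ref{stabsteep} and the confinement bootstrap, matches the paper.
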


Again, the proof of Theorem 3' will be the same as the proof of Theorem~\ref{thm3}.

\subsection{Applications}

The techniques used to prove Theorem~\ref{thm1}, Theorem~\ref{thm2} and Theorem~\ref{thm3} have been already used in the study of the dynamics in a neighborhood of an invariant quasi-periodic torus. In this case, the Hamiltonian is a small perturbation of a linear integrable Hamiltonian system, so our results are in principle applicable; however, the perturbation being quite specific, our generic conditions $(G1)$, $(G2)$ or $(G3)$ cannot be always satisfied. To be more precise, one considers here a Hamiltonian of the form
\begin{equation}\label{Ham}\tag{$*$}
H(\theta,I)=\alpha \cdot I + A(\theta)I\cdot I+R(\theta,I), \quad (\theta,I)\in \T^n \times U  
\end{equation}
where $U$ is an open neighborhood of the origin in $\R^n$, $A: \T^n \rightarrow \mathrm{Sym}(n,\R)$ is a map taking values in the space of real symmetric matrices of size $n$, and $R(\theta,I)=O_3(I)$ is of order at least $3$ in $I$. The set $\mathcal{T}_\alpha:=\T^n \times \{I=0\}$ is invariant by the Hamiltonian flow of $H$, it is a Lagrangian quasi-periodic torus with frequency $\alpha$. On the domain $\T^n \times B_\varepsilon$, where $B_\varepsilon$ is the $\varepsilon$-ball around the origin in $\R^n$, the Hamiltonian in~\eqref{Ham} is easily seen to be equivalent to 
\begin{equation*}
H'(\theta,I)=\alpha \cdot I + \varepsilon A(\theta)I\cdot I+\varepsilon^2 R(\theta,I), \quad (\theta,I)\in \T^n \times B_1
\end{equation*} 
and this Hamiltonian $H'$ is of the form~\eqref{Ham1} or~\eqref{Ham2} (depending whether $\alpha$ is resonant or not, and on the regularity of $H$) with
\[  f_\varepsilon=A(\theta)I\cdot I+\varepsilon R(\theta,I), \quad f=f_0=A(\theta)I\cdot I. \] 
It is not hard to see that Theorem~\ref{thm1} cannot be used in a neighborhood of a resonant invariant quasi-periodic torus to prove topological instability, as $(G1)$ do not hold at $I_*=0$. Yet under different generic conditions and using techniques similar to those that will be used to prove Theorem~\ref{thm1}, one can still prove that generically such a torus possesses some weaker instability properties, as shown in~\cite{Bou15mrl}.

Concerning Theorem~\ref{thm2}, the situation is much better as it is directly applicable: the condition $(G2)$ amounts to say that the matrix $A_0:=\int_{\T^n}A(\theta)d\theta$ is non-singular. Hence one can show that a non-resonant invariant quasi-periodic torus is always KAM stable, without requiring any Diophantine condition on the frequency vector. This application has already been worked out in~\cite{Bou14a}.

Finally, Theorem~\ref{thm3} is not sufficient to prove that generically, a non-resonant invariant quasi-periodic torus is doubly exponentially stable. Indeed, in this case $f=f_0$ is a quadratic function of $I$ and therefore $(G3)$ cannot be checked. The result does apply if $(G3)$ is replaced by the following stronger condition:

\bigskip

$(G4)$ There exists $I_*$ such that the restriction of $\nabla^2 \bar{f}(I_*)$ to the orthogonal complement of $\alpha \in \R^n$ is sign-definite.

\bigskip

Theorem~\ref{thm3} do apply if $(G3)$ is replaced by $(G4)$, and its proof can be greatly simplified in this case (this is because, for $\varepsilon$ small enough, $(G4)$ implies that $\bar{f}_\varepsilon$ is quasi-convex in a neighborhood of $I^*$, while $(G4)$ implies that $\bar{f}_\varepsilon$ is steep in a neighborhood of $I^*$, a much more general condition that will be recalled in~\S\ref{s6}). However, this condition $(G4)$ is no longer generic. Using Theorem~\ref{thm3} (with the condition $(G3)$ replaced by $(G4)$), we obtain the following corollary.

\begin{corollary}
Let $H$ be as in~\eqref{Ham}, assume that $H$ is real-analytic, $\alpha$ is non-resonant and $A_0$ is sign-definite when restricted to the orthogonal complement of $\alpha \in \R^n$. Then there exist positive constants $c$ and $\tilde{c}$ such that for $\varepsilon>0$ small enough and for any solution $(\theta(t),I(t))$ of the system associated to $H$ with $|I(0)|\leq \varepsilon$, we have
\[ |I(t)|\leq 2\varepsilon, \quad |t|\leq \varepsilon^{-1}\exp\exp\left(-\tilde{c}\Delta_{\alpha}(c\varepsilon^{-1})\right). \]
\end{corollary}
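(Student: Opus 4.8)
The plan is to reduce the statement to Theorem~\ref{thm3} — in the variant where $(G3)$ is replaced by $(G4)$, as discussed above — by means of the dilation already described after~\eqref{Ham}. First I would apply the symplectic scaling $(\theta,I)\mapsto(\theta,J)$ with $I=\varepsilon J$, which turns the Hamiltonian~\eqref{Ham}, restricted to $\T^n\times B_\varepsilon$, into
\[ \tilde H(\theta,J)=\varepsilon^{-1}H(\theta,\varepsilon J)=\alpha\cdot J+\varepsilon A(\theta)J\cdot J+\varepsilon^{-1}R(\theta,\varepsilon J) \]
on $\T^n\times B_1$, and, by real-analyticity, on a fixed complex neighbourhood of $\T^n\times B_2$. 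Since $R=O_3(I)$, the last term is $\varepsilon^2\tilde R(\theta,J)$ with $\tilde R$ holomorphic and bounded uniformly in $\varepsilon$ on that neighbourhood; hence $\tilde H$ has the form~\eqref{Ham2} with $f_\varepsilon=A(\theta)J\cdot J+\varepsilon\tilde R(\theta,J)$ and $f=f_0=A(\theta)J\cdot J$, after a harmless normalization of the uniform bound on $f_\varepsilon$ (dividing $H$ by a fixed constant, which only affects the final constants). Note that this scaling does not rescale time, so every stability estimate for the flow of $\tilde H$ transfers directly to the flow of $H$ via $I(t)=\varepsilon J(t)$.

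Next I would verify the hypotheses of Theorem~\ref{thm3} (with $(G3)$ replaced by $(G4)$) for $\tilde H$. The vector $\alpha$ is non-resonant by assumption. The average is $\bar f_\varepsilon(J)=A_0J\cdot J+\varepsilon\,\overline{\tilde R}(J)$ with $A_0=\int_{\T^n}A(\theta)\,d\theta$, and $\bar f(J)=A_0J\cdot J$, so $\nabla^l\bar f_\varepsilon-\nabla^l\bar f=\varepsilon\,\nabla^l\overline{\tilde R}$ tends to $0$ uniformly on compact sets as $\varepsilon\to0$, giving the required convergence at any base point. Finally $\nabla^2\bar f\equiv2A_0$, whose restriction to $\alpha^\perp$ is sign-definite by hypothesis, so $(G4)$ holds — and it holds at every point, which is the special feature of the quadratic situation.

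I would then apply Theorem~\ref{thm3} in its $(G4)$ form: for $\varepsilon$ small there are constants $c,\tilde c,\tilde C$ and a ball $B_*$ such that any solution of $\tilde H$ with $J(0)\in B_*$ obeys $|J(t)-J(0)|\le\tilde C\Delta_\alpha(c\varepsilon^{-1})^{-1}$ for $|t|\le\varepsilon^{-1}\exp\exp(-\tilde c\Delta_\alpha(c\varepsilon^{-1}))$. Scaling back gives $|I(t)-I(0)|\le\varepsilon\tilde C\Delta_\alpha(c\varepsilon^{-1})^{-1}$; since $\Delta_\alpha(c\varepsilon^{-1})\to\infty$ as $\varepsilon\to0$, the factor $\tilde C\Delta_\alpha(c\varepsilon^{-1})^{-1}$ is $\le1$ for $\varepsilon$ small, hence $|I(t)|\le|I(0)|+\varepsilon\le2\varepsilon$ on that time interval, after renaming the constants to absorb the fixed factors produced by the dilation.

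The one point requiring care — and the only real obstacle — is that the ball $B_*$ coming out of Theorem~\ref{thm3} must be permitted to contain the fixed ball $\{\,|J(0)|\le1\,\}$; otherwise the conclusion would not cover all initial data with $|I(0)|\le\varepsilon$. This is precisely where the quadratic nature of $\bar f$ enters: because $\nabla^2\bar f_\varepsilon$ is uniformly close to the constant matrix $2A_0$ throughout $B_2$, the quasi-convexity of $\bar f_\varepsilon$ transverse to $\alpha$ that drives the simplified proof of Theorem~\ref{thm3} under $(G4)$ holds on a ball independent of $\varepsilon$, so $B_*$ can be taken to be $B_1$. Equivalently, one uses the global statement (Theorem 3') with the global analogue of $(G4)$, which here is automatic since $\nabla^2\bar f\equiv2A_0$: the stability domain $B-\tilde C\Delta_\alpha(c\varepsilon^{-1})^{-1}$ then contains $\{\,|J(0)|\le1\,\}$ for $\varepsilon$ small. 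Everything else — the uniform holomorphic bound on $\tilde R$, and the bookkeeping of constants under the dilation — is routine.
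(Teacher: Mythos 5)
Your proposal is correct and follows essentially the same route as the paper: rescale $I=\varepsilon J$ so that \eqref{Ham} becomes a Hamiltonian of the form \eqref{Ham2} with $f=A(\theta)J\cdot J$, note that $\nabla^2\bar f\equiv 2A_0$ so that the hypothesis $(G4)$ (indeed its global analogue, giving quasi-convexity of the normal form on a fixed ball) holds, and apply Theorem~\ref{thm3} in its $(G4)$ form before scaling back. Your explicit treatment of the only delicate point — that the stability domain must contain the whole rescaled ball $\{|J(0)|\leq 1\}$, which is guaranteed here precisely because $\bar f$ is quadratic — is exactly what the paper relies on implicitly.
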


This last result gives a slight generalization of a result of Morbidelli and Giorgilli (see \cite{MG95}), in which the above statement is proved assuming $\alpha$ to be Diophantine; note that their proof do not extend to an arbitrary non-resonant vector $\alpha$ as they use Birkhoff normal forms, which in general do not exist unless $\alpha$ is Diophantine. On the other hand, assuming $\alpha$ to be Diophantine (and using Birkhoff normal forms), it is proved in \cite{BFN15b} that generically, the invariant torus is doubly exponentially stable (an analogous result, in the context of elliptic equilibrium points, is contained in \cite{BFN15}). We do not know if this last result can be obtained without the assumption that $\alpha$ is Diophantine.

\section{Normal forms}\label{s3}

The main ingredients to prove Theorem~\ref{thm1}, Theorem~\ref{thm2} and Theorem~\ref{thm3} are normal form results that will be recalled in this section.  

First we assume that $\alpha \in \R^n$ is resonant, more precisely $\alpha=(0,\tilde{\alpha}) \in \R^d \times \R^{n-d}$ with $1 \leq d \leq n-1$ and $\tilde{\alpha}\in \R^{n-d}$ non-resonant. We can also define $\Psi_{\tilde{\alpha}}$ by
\begin{equation*}
\Psi_{\tilde{\alpha}}(Q)=\max\left\{|k\cdot\tilde{\alpha}|^{-1} \; | \; k\in\Z^{n-d}, \; 0<|k|\leq Q \right\}, \quad Q \geq 1
\end{equation*} 
and $\Delta_{\tilde{\alpha}}$ by
\begin{equation*}
\Delta_{\tilde{\alpha}}(x)=\sup\{Q \geq 1\; | \; Q\Psi_{\tilde{\alpha}}(Q)\leq x\}, \quad x \geq 1.
\end{equation*} 
The proposition below will be used to prove Theorem~\ref{thm1}. 

\begin{proposition}\label{normal1}
Let $H$ be as in~\eqref{Ham1}. Then there exist constants $c:=c(n)$, $C:=C(n,k)$ and $\mu_0:=\mu_0(n,k)$ such that if
\begin{equation}\label{mu}
\mu_{\tilde{\alpha}}(\varepsilon):=\Delta_{\tilde{\alpha}}\left(c\varepsilon^{-1}\right)^{-1} \leq \mu_0,
\end{equation}
then, setting $B'=B-C\mu_{\tilde{\alpha}}(\varepsilon)$, there exist a symplectic embedding $\Phi :\T^n \times \bar{B}' \rightarrow \T^n \times \bar{B}$ of class $C^{k-1}$ such that
\[ H \circ \Phi(\theta,I)=\alpha \cdot I+\varepsilon \bar{f}_\varepsilon(\bar{\theta},I)+ \varepsilon f_\varepsilon^+(\theta,I) \]
with the estimates 
\begin{equation}\label{estimates}
\begin{cases}
|f_\varepsilon^+|_{C^{k-1}(\T^n \times \bar{B}')}\leq C \mu_{\tilde{\alpha}}(\varepsilon), \\ 
|\Phi-\mathrm{Id}|_{C^{k-1}(\T^n \times \bar{B}')} \leq C \mu_{\tilde{\alpha}}(\varepsilon).
\end{cases}
\end{equation}
\end{proposition}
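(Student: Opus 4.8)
The plan is to construct $\Phi$ as the time-one map of a Hamiltonian flow that averages out the dependence of $f_\varepsilon$ on the ``fast'' angles $\tilde\theta$ up to an optimally chosen finite Fourier order, much as in the classical one-step averaging lemma but carried out in the $C^k$ category. I would fix the cutoff $K:=\Delta_{\tilde\alpha}(c\varepsilon^{-1})$, with $c=c(n)$ small to be chosen; then $\mu:=\mu_{\tilde\alpha}(\varepsilon)=K^{-1}$, and by the very definition of $\Delta_{\tilde\alpha}$ one has
\[ \varepsilon\,\Psi_{\tilde\alpha}(K)\leq c\,K^{-1}=c\,\mu,\qquad \varepsilon\,K\,\Psi_{\tilde\alpha}(K)\leq c, \]
the two inequalities that will turn estimates governed by the small divisors $|k\cdot\alpha|^{-1}=|\tilde k\cdot\tilde\alpha|^{-1}$ into estimates of size $\mu$.

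First I would write $f_\varepsilon=S_Kf_\varepsilon+(1-S_K)f_\varepsilon$, where $S_K$ is a smoothing operator (of Jackson--Moser--Zehnder type) cutting the Fourier expansion in $\theta$ essentially at order $K$: the tail obeys $|(1-S_K)f_\varepsilon|_{C^{k-1}(\T^n\times\bar B)}\leq C K^{-1}|f_\varepsilon|_{C^k}\leq C\mu$, losing a single derivative, while $S_Kf_\varepsilon$ is a trigonometric polynomial in $\theta$ of degree $\leq K$ with $|S_Kf_\varepsilon|_{C^\ell}\leq C|f_\varepsilon|_{C^\ell}$ for $\ell\leq k$. Denote by $\Pi$ the projection onto Fourier modes $k=(\bar k,0)$ (its image is precisely the partial average in $\tilde\theta$) and by $\Pi^{\perp}=\mathrm{Id}-\Pi$ the complementary one, so that $\bar f_\varepsilon=\Pi f_\varepsilon$ and $\bar f_\varepsilon-\Pi S_Kf_\varepsilon=\Pi(1-S_K)f_\varepsilon$ is again of $C^{k-1}$ size $\leq C\mu$. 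I would then solve the homological equation
\[ \alpha\cdot\partial_\theta\chi=-\varepsilon\,\Pi^{\perp}S_Kf_\varepsilon \]
by dividing the Fourier coefficients of the right-hand side by $2\pi i(\tilde k\cdot\tilde\alpha)$; this is legitimate because $\tilde\alpha$ is non-resonant, it yields $\chi$ a trigonometric polynomial of degree $\leq K$, and the small-divisor bound $|\tilde k\cdot\tilde\alpha|^{-1}\leq\Psi_{\tilde\alpha}(K)$ for $0<|k|\leq K$ gives $|\chi|,|\partial_I\chi|\leq C\varepsilon\Psi_{\tilde\alpha}(K)\leq C\mu$. The size of $\partial_\theta\chi$ is more delicate: the naive Bernstein bound only gives $|\partial_\theta\chi|\lesssim K|\chi|\lesssim 1$, and the sharper bound $|\partial_\theta\chi|_{C^{k-1}}\leq C\mu$ that is actually needed has to be extracted from the rapid decay of the Fourier coefficients of the $C^k$ function $f_\varepsilon$ together with $\varepsilon K\Psi_{\tilde\alpha}(K)\leq c$.

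Next I would set $\Phi:=\Phi^1_\chi$, the time-one Hamiltonian flow of $\chi$. Along this flow the actions move at the rate $-\partial_\theta\chi$, which by the previous estimate has norm $\leq C\mu$, so the action drift is $\leq C\mu$ and $\Phi$ is a well-defined symplectic embedding of $\T^n\times\bar B'$ into $\T^n\times\bar B$ for $B'=B-C\mu$; a Gronwall estimate on the flow equation then yields $|\Phi-\mathrm{Id}|_{C^{k-1}(\T^n\times\bar B')}\leq C\mu$ (one cannot do better than $C^{k-1}$ with a $\mu$-small norm, since already the $C^k$ norm of $\chi$ carries the compensating Bernstein factor $K$). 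Finally, starting from $H\circ\Phi=H+\{H,\chi\}+\int_0^1\!\!\int_0^t\{\{H,\chi\},\chi\}\circ\Phi^s_\chi\,ds\,dt$ and using $\{H,\chi\}=\{\alpha\cdot I,\chi\}+\varepsilon\{f_\varepsilon,\chi\}=-\varepsilon\,\Pi^{\perp}S_Kf_\varepsilon+\varepsilon\{f_\varepsilon,\chi\}$, one obtains $H\circ\Phi=\alpha\cdot I+\varepsilon\bar f_\varepsilon+\varepsilon f_\varepsilon^{+}$, where $f_\varepsilon^{+}$ is the sum of the two high-frequency pieces $\Pi^{\perp}(1-S_K)f_\varepsilon$ and $-\Pi(1-S_K)f_\varepsilon$, of the Poisson bracket $\{f_\varepsilon,\chi\}$, and of the double-integral remainder $\int_0^1\!\!\int_0^t\{\varepsilon^{-1}\{H,\chi\},\chi\}\circ\Phi^s_\chi\,ds\,dt$. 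The high-frequency terms are $\leq C\mu$ by construction; the Poisson brackets $\{f_\varepsilon,\chi\}$ and $\{\{H,\chi\},\chi\}$ are controlled by products of a $C^k$ norm of $f_\varepsilon$ (or of $\chi$) with a $\theta$-derivative of $\chi$, using that $\chi$ is a trigonometric polynomial of degree $K$ and the inequality $\varepsilon K\Psi_{\tilde\alpha}(K)\leq c$, so that $|f_\varepsilon^{+}|_{C^{k-1}(\T^n\times\bar B')}\leq C\mu$.

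The main obstacle is exactly this last family of $C^{k-1}$ estimates. For a merely non-resonant --- possibly Liouville --- vector $\tilde\alpha$ and in finite differentiability, the inverse operator $(\alpha\cdot\partial_\theta)^{-1}$ restricted to the non-resonant modes of order $\leq K$ is not a bounded Fourier multiplier in any naive sense, so controlling $\chi$, its $\theta$-gradient, and the quadratic remainder $\{\{H,\chi\},\chi\}$ in $C^{k-1}$ forces a careful interplay between the smoothing operator $S_K$, Bernstein inequalities for trigonometric polynomials of degree $K$, the coefficient decay of $C^k$ functions, and the two arithmetic inequalities above. The whole point of choosing $K=\Delta_{\tilde\alpha}(c\varepsilon^{-1})$, with $c=c(n)$ small, is to make all the resulting polynomial-in-$K$ and purely dimensional factors absorbable into the constant $C=C(n,k)$ under the smallness condition $\mu\leq\mu_0(n,k)$; checking that the loss of regularity is only of one derivative (from $C^k$ to $C^{k-1}$) is the technical heart of the argument.
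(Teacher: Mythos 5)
First, a point of comparison: the paper does not prove Proposition~\ref{normal1} at all --- it is quoted as a special case of Theorem~1.1 of~\cite{Bou13b}, where the proof proceeds by the method of periodic approximations: $\tilde{\alpha}$ is approximated by periodic (rational) frequency vectors via Dirichlet's theorem (this is exactly where $\Psi_{\tilde{\alpha}}$ and $\Delta_{\tilde{\alpha}}$ enter), and one averages along the corresponding periodic flows. Averaging along a periodic flow is an integration/composition operator: it involves no small divisors and is bounded on $C^k$ spaces, so only a fixed finite number of steps, and hence a bounded loss of derivatives, is required. Your Fourier-cutoff scheme is therefore a genuinely different route, and it is precisely the route the reference avoids because it does not close in finite differentiability.

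The concrete gap is in the homological-equation estimates, and your proposed fix for it is wrong. To bound $\chi$, $\partial_\theta\chi$ and $\{f_\varepsilon,\chi\}$ in $C^{k-1}$ you must sum moduli of Fourier coefficients over the $\asymp K^n$ modes $0<|k|\leq K$; since $C^k$ regularity only gives the decay $|\hat{f}_k|\lesssim |k|^{-k}$, the sum $\sum_{0<|k|\leq K}|k|^{k}|\hat{f}_k|$ that controls $|\partial_\theta\chi|_{C^{k-1}}$ is of order $K^n$, not of order $1$. Hence the best available bound is $|\partial_\theta\chi|_{C^{k-1}}\lesssim \varepsilon\Psi_{\tilde{\alpha}}(K)\,K^{n}\lesssim \mu\, K^{n}=K^{n-1}$, which diverges as $\varepsilon\to0$; already the $C^0$ bound $|\partial_\theta\chi|_{C^0}\lesssim\mu$ fails whenever $k\leq n+1$, which is allowed since only $k\geq3$ is assumed. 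Such polynomial-in-$K$ factors cannot be ``absorbed into the constant $C(n,k)$'': $K=\Delta_{\tilde{\alpha}}(c\varepsilon^{-1})\to\infty$, and shrinking $c=c(n)$ changes $K$ itself, not the power of $K$ in front of $\mu$. To make the mode sums converge you would need to start from roughly $C^{k+n+1}$, i.e., lose about $n+2$ derivatives instead of one, which is strictly weaker than the statement. This is not a technical detail that can be deferred to ``the technical heart of the argument''; it is the structural reason the small-divisor/Fourier approach is replaced by periodic approximations in~\cite{Bou13b}.
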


This statement is a special case of Theorem 1.1 in~\cite{Bou13b}, to which we refer for a proof.

\bigskip

In the case where $\alpha \in \R^n$ is non-resonant and the Hamiltonian is real-analytic, we have the following proposition, which will be used to prove both Theorem~\ref{thm2} and Theorem~\ref{thm3}. 

\begin{proposition}\label{normal2}
Let $H$ be as in~\eqref{Ham2}. Then there exist constants $c:=c(n)$, $c':=c'(n,s)$, $C:=C(n,s,r)$ and $\mu_0:=\mu_0(n,s,r)$ such that if
\begin{equation}\label{mu2}
\mu_{\alpha}(\varepsilon):=\Delta_{\alpha}\left(c\varepsilon^{-1}\right)^{-1} \leq \mu_0,
\end{equation}
then there exist a real-analytic symplectic embedding $\Phi :V_{s/2,r/2} \rightarrow V_{s,r}$ such that
\[ H \circ \Phi(\theta,I)=\alpha \cdot I+\varepsilon \bar{f}_\varepsilon(I)+\varepsilon g_\varepsilon(I) +\varepsilon f_\varepsilon^+(\theta,I) \]
with the estimates 
\begin{equation}\label{estimates2}
\begin{cases}
|g_\varepsilon|_{s/2,r/2}\leq C \mu_{\alpha}(\varepsilon), \\ 
|f_\varepsilon^+|_{s/2,r/2}\leq C \exp\left(-c'\mu_{\alpha}(\varepsilon)^{-1}\right),  \\
|\Phi-\mathrm{Id}|_{s/2,r/2} \leq C\mu_{\alpha}(\varepsilon).
\end{cases}
\end{equation}
\end{proposition}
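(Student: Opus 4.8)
The plan is to prove Proposition~\ref{normal2} by an averaging (normal form) construction which, thanks to the linearity of $h(I)=\alpha\cdot I$, is lighter than in the usual KAM or Nekhoroshev setting: the Hamiltonian vector field of $h$ is the constant field $(\alpha,0)$, so no symplectic change of coordinates close to the identity affects the frequency, and the sole analytic ingredient is the quantitative resolution of the cohomological equation associated with the nonresonant vector $\alpha$. Everything hinges on the choice of the Fourier truncation order $Q:=\Delta_\alpha(c\varepsilon^{-1})$: the definition of $\Delta_\alpha$ then yields at once $\varepsilon\Psi_\alpha(Q)\le c\,\mu_\alpha(\varepsilon)$ and $\varepsilon\,Q\,\Psi_\alpha(Q)\le c$, while~\eqref{mu2} forces $\mu_\alpha(\varepsilon)=Q^{-1}$ to be small; these are the only properties of $\alpha$ that will be used.

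I would then iterate the following elementary step, based on a cohomological lemma. Let $g$ be bounded holomorphic on $V_{s,r}$, write $g=\sum_{k}g_k(I)e^{2i\pi k\cdot\theta}$, $\langle g\rangle=g_0$, $g^{\le Q}=\sum_{0<|k|\le Q}g_k(I)e^{2i\pi k\cdot\theta}$, and let $N=N(I)$ be an integrable Hamiltonian with $|\nabla N-\alpha|\le C_0\varepsilon$. The inequality $\varepsilon Q\Psi_\alpha(Q)\le c$, for a small constant $c$, forces $|k\cdot\nabla N(I)|\ge\tfrac12\Psi_\alpha(Q)^{-1}$ for all $0<|k|\le Q$ on the relevant complex domain — this perturbed small-divisor estimate is the reason $\Delta_\alpha$ carries the extra factor $Q$ and why~\eqref{mu2} is required — so that
\[ \chi=\sum_{0<|k|\le Q}\frac{g_k(I)}{2i\pi\,k\cdot\nabla N(I)}\,e^{2i\pi k\cdot\theta} \]
is well defined, solves $\{N,\chi\}+g^{\le Q}=0$, and obeys $|\chi|_{s-\sigma,r}\le C(n)\sigma^{-n}\Psi_\alpha(Q)|g|_{s,r}$ for $0<\sigma<s$, the ultraviolet tail satisfying $|g-\langle g\rangle-g^{\le Q}|_{s-\sigma,r}\le C(n)\sigma^{-n}e^{-\sigma Q}|g|_{s,r}$. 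Starting from $H=\alpha\cdot I+\varepsilon f_\varepsilon$ with $|f_\varepsilon|_{s,r}\le1$, the Hamiltonian after $j$ steps has the form $H_j=\alpha\cdot I+\varepsilon\bar f_\varepsilon(I)+\varepsilon g_j(I)+\varepsilon u_j(\theta,I)$ with $\langle u_j\rangle=0$; one applies the lemma with $N=N_j:=\alpha\cdot I+\varepsilon\bar f_\varepsilon+\varepsilon g_j$ and $g=\varepsilon u_j$ (legitimate because $|g_j|$ stays $O(\mu_\alpha(\varepsilon))$, hence $|\nabla N_j-\alpha|=O(\varepsilon)$) and conjugates $H_j$ by the time-one map of $X_{\varepsilon\chi_j}$. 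Because the cohomological equation is solved \emph{modulo the current integrable part} $N_j$ rather than only modulo $\alpha\cdot I$, the term $\{N_j,\varepsilon\chi_j\}$ cancels $\varepsilon u_j^{\le Q}$ exactly, so the only contributions to the new remainder $\varepsilon u_{j+1}$ are the ultraviolet tail of $\varepsilon u_j$ and brackets quadratic in $\varepsilon u_j$; schematically $|u_{j+1}|\lesssim\sigma_j^{-N_0}\bigl(\varepsilon\Psi_\alpha(Q)|u_j|+e^{-\sigma_j Q}\bigr)|u_j|$ with $N_0=N_0(n)$, together with $|g_{j+1}-g_j|\lesssim\mu_\alpha(\varepsilon)|u_j|$ and $|\varepsilon\chi_j|\lesssim\mu_\alpha(\varepsilon)|u_j|$. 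As $|u_0|\le1$ and a single averaging over $|k|\le Q$ already gives $|u_1|\lesssim\mu_\alpha(\varepsilon)$, summing over $j$ produces the integrable correction $g_\varepsilon=\sum_j(g_{j+1}-g_j)$ with $|g_\varepsilon|_{s/2,r/2}\le C\mu_\alpha(\varepsilon)$ and the symplectic embedding $\Phi:V_{s/2,r/2}\to V_{s,r}$, the composition of the elementary conjugations, with $|\Phi-\mathrm{Id}|_{s/2,r/2}\le C\mu_\alpha(\varepsilon)$, provided $\mu_\alpha(\varepsilon)\le\mu_0$ so that all intermediate maps stay in $V_{s,r}$.

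The main obstacle is to terminate the iteration at the optimal order so that the surviving angle-dependent term $\varepsilon f_\varepsilon^+$ is as small as announced in~\eqref{estimates2}, namely $O\bigl(\varepsilon\exp(-c'\mu_\alpha(\varepsilon)^{-1})\bigr)$ with $c'=c'(n,s)$, rather than a weaker estimate $\exp(-c'\mu_\alpha(\varepsilon)^{-a})$ with some $a<1$: spreading a fixed fraction of $s$ and $r$ over $N$ steps costs a factor $\sigma_j^{-N_0}\sim N^{N_0}$ per cohomological solution, and the whole point is to balance this loss, the quadratic gain $\varepsilon\Psi_\alpha(Q)\lesssim\mu_\alpha(\varepsilon)$ and the ultraviolet tail $e^{-\sigma_j Q}$ against the number of steps — this is exactly the analytic normal-form bookkeeping of~\cite{Bou12} and~\cite{Bou13b}. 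Since the frequency $\alpha$ is never adjusted in this construction, Proposition~\ref{normal2} is the non-resonant, real-analytic face of that normal-form machinery, applied once with truncation order $Q=\Delta_\alpha(c\varepsilon^{-1})$, and may be quoted from it just as Proposition~\ref{normal1} is quoted from~\cite{Bou13b}; setting $f_\varepsilon^+:=\varepsilon^{-1}$ times the final remainder then gives~\eqref{estimates2}. Finally, all the constants $c,c',C,\mu_0$ are automatically independent of $\varepsilon$, since the construction uses only the uniform bound $|f_\varepsilon|_{s,r}\le1$ and the arithmetic functions $\Psi_\alpha$ and $\Delta_\alpha$.
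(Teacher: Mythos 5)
The paper itself offers no proof of Proposition~\ref{normal2}: it is quoted as a special case of Theorem 1.1 in \cite{Bou13a}, exactly as Proposition~\ref{normal1} is quoted from \cite{Bou13b}. Your final move --- deferring to Bounemoura's normal form machinery --- is therefore the same approach as the paper's, but the reference should be \cite{Bou13a} (the real-analytic statement with exponentially small remainder); \cite{Bou13b} is the finitely differentiable companion behind Proposition~\ref{normal1} and does not give the bound $\exp(-c'\mu_\alpha(\varepsilon)^{-1})$, while \cite{Bou12} contains related but not identical statements. Note also that the cited proofs do not proceed by Fourier truncation and small divisors at all, but by successive averaging along periodic approximations of $\alpha$ (which is how $\Delta_\alpha$ enters there), so the construction you sketch is genuinely different from the one you propose to quote.

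If the sketch is meant to stand on its own, the decisive estimate is precisely the one you leave to ``bookkeeping''. With sup norms and a loss $\sigma_j^{-N_0}$ per homological step, the naive balance (keep the contraction factor below a constant, i.e. $\varepsilon\Psi_\alpha(Q)\,\sigma_j^{-N_0}\lesssim 1$ with $\sigma_j\sim s/p$ over $p$ steps) caps $p$ at $\mu_\alpha(\varepsilon)^{-1/N_0}$ and yields only $\exp\bigl(-c'\mu_\alpha(\varepsilon)^{-1/N_0}\bigr)$, the weaker bound you rightly say must be avoided; so the claimed exponent does not follow from the recursion as you balance it. What saves the scheme is a feature of your own displayed recursion that you do not exploit: the term of $|u_{j+1}|$ linear in $|u_j|$ is the ultraviolet tail and carries the factor $e^{-\sigma_j Q}$, so over $p$ steps these factors telescope to $e^{-Q\sum_j\sigma_j}\sim e^{-sQ/2}$, while the quadratic terms decay much faster and the accumulated losses $\prod_j\sigma_j^{-N_0}\lesssim e^{C p\log p}$ are harmless once $p\lesssim sQ/\log Q$; one must also check the uniform lower bound $|k\cdot\nabla N_j|\geq\tfrac12\Psi_\alpha(Q)^{-1}$ at every step (this is where $\varepsilon Q\Psi_\alpha(Q)\leq c$ and the boundedness of $\nabla(\bar f_\varepsilon+g_j)$ are used) and the control of the $I$-domain along the composition. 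Spelling out this choice of $p$ and the telescoping is what turns the sketch into a proof with $c'=c'(n,s)$ as in~\eqref{estimates2}; as written, the key balancing is asserted, not done, and the fallback is in effect the same citation the paper makes --- just aimed at the wrong paper.
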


As before, this statement is a special case of Theorem 1.1 in~\cite{Bou13a}, to which we refer for a proof.

\section{Proofs of Theorem~\ref{thm1} and Theorem 1'}\label{s4}

The proof of Theorem~\ref{thm1} will follow easily from Proposition~\ref{normal1}, using arguments similar to those used in~\cite{BK14} and~\cite{BK15}.
 
Proposition~\ref{normal1} provides us with a (symplectic, close to the identity) change of coordinates which reveals that the dominant part of the Hamiltonian is given by $h+\varepsilon\bar{f}_\varepsilon$, where $\bar{f}_\varepsilon$ is the (partial) average of $f_\varepsilon$. The system associated to this dominant part is no longer integrable, because of the condition $(G1)$ it has unstable solutions and an elementary argument will show that the same is true for the system associated to $H$. 

\begin{proof}[Proof of Theorem~\ref{thm1}]
Recall that we are considering $H$ as in~\eqref{Ham1}, with $f$ satisfying $(G1)$, which is the existence of $I_* \in B$ and $\bar{\theta}_* \in \T^d$ such that $\partial_{\bar{\theta}}\bar{f}(\bar{\theta}_*,I_*) \neq 0$. Recall also that
\begin{equation}\label{speed1}
\lim_{\varepsilon \rightarrow 0}|\partial_{\bar{\theta}}\bar{f}_\varepsilon(\bar{\theta}_*,I_*)-\partial_{\bar{\theta}}\bar{f}(\bar{\theta}_*,I_*)|=0.
\end{equation}
From $(G1)$, we have
\[ |\partial_{\bar{\theta}}\bar{f}_*(\bar{\theta}_*)|=|\partial_{\bar{\theta}}\bar{f}(\bar{\theta}_*,I_*)|:=3\zeta>0. \]
Hence, by~\eqref{speed1}, for $\varepsilon$ small enough with respect to $\zeta$, we have
\begin{equation*}
|\partial_{\bar{\theta}}\bar{f}_\varepsilon(\bar{\theta}_*,I_*)|\geq 2\zeta>0.
\end{equation*}
If we let $B_\zeta(I_*)$ be the ball of radius $\zeta$ around $I_*$, for all $I \in B_\zeta(I_*)$ this last inequality gives
\begin{equation*}\label{lowerbound}
|\partial_{\bar{\theta}}\bar{f}(\bar{\theta}_*,I)|\geq\zeta>0.
\end{equation*}
Up to taking $\zeta$ smaller with respect to the distance of $I_*$ to $\partial B=\bar{B}\setminus B$, we may assume that $B_\zeta(I_*)$ is contained in $B$, and then taking $\varepsilon$ smaller, we may assume that $B_\zeta(I_*)$ is contained in $B'=B-C\mu_{\tilde{\alpha}}(\varepsilon)$. From now on, to simplify notations, when convenient we will suppress all constants depending on $n$, $k$, $\zeta$ and the function $\Delta_{\tilde{\alpha}}$ by using the symbols $\lesssim$, $\gtrsim$ and $\sim$.

We start by assuming $\varepsilon \lesssim 1$ so that~\eqref{mu} is satisfied and Proposition~\ref{normal1} can be applied. Let $H \circ \Phi$ be the Hamiltonian in formal form, which is defined on $\T^n \times B'$, and has the form
\[ \bar{H}(\theta,I)=\alpha\cdot I+\varepsilon \bar{f}_\varepsilon(\bar{\theta},I)+ \varepsilon f_\varepsilon^+(\theta,I)=\tilde{\alpha}\cdot \tilde{I}+\varepsilon \bar{f}_\varepsilon(\bar{\theta},I)+ \varepsilon f_\varepsilon^+(\theta,I) \]
where $I=(\bar{I},\tilde{I}) \in \R^d \times \R^{n-d}$. Consider the solution $(\theta(t),I(t))$ of the system associated to $H \circ \Phi$ with $\bar{\theta}(0)=\bar{\theta}_* \in \T^d$, $\tilde{\theta}(0) \in \T^{n-d}$ arbitrary and $I(0) \in B_\zeta(I_*)$. It satisfies the following system of equation:
\begin{equation}\label{sys}
\begin{cases}
\dot{\bar{\theta}}(t)=\varepsilon \partial_{\bar{I}}\bar{f}_\varepsilon(\bar{\theta}(t),I(t))+\varepsilon\partial_{\bar{I}}f_\varepsilon^+(\theta(t),I(t)), \\
\dot{\tilde{\theta}}(t)=\tilde{\alpha}+\varepsilon \partial_{\tilde{I}}\bar{f}_\varepsilon(\bar{\theta}(t),I(t))+\varepsilon\partial_{\tilde{I}}f_\varepsilon^+(\theta(t),I(t)), \\
\dot{\bar{I}}(t)=-\varepsilon \partial_{\bar{\theta}}\bar{f}_\varepsilon(\bar{\theta}(t),I(t))-\varepsilon\partial_{\bar{\theta}} f_\varepsilon^+(\theta(t),I(t)), \\
\dot{\tilde{I}}(t)=-\varepsilon\partial_{\tilde{\theta}} f_\varepsilon^+(\theta(t),I(t)).
\end{cases}
\end{equation}
Using the fact that $\bar{f}_\varepsilon$ has unit norm and the estimates~\eqref{estimates} on $f_\varepsilon^+$, we obtain from the first, third and fourth equation of~\eqref{sys} that, for $t \leq \tau\sim \varepsilon^{-1}$ and $\varepsilon \lesssim 1$,
\[ |\bar{\theta}(t)-\bar{\theta}(0)| \leq \tau\left|\dot{\bar{\theta}}(t)\right| \lesssim 1+\mu(\varepsilon) \lesssim 1 \]
\[ |\bar{I}(t)-\bar{I}(0)| \leq \tau\left|\dot{\bar{I}}(t)\right| \lesssim 1+\mu(\varepsilon) \lesssim 1 \]
\[ |\tilde{I}(t)-\tilde{I}(0)| \leq \tau\left|\dot{\tilde{I}}(t)\right| \lesssim \mu(\varepsilon) \lesssim 1. \]
These inequalities, together with~\eqref{lowerbound} and our choices of $\bar{\theta}(0)=\bar{\theta}_*$ and $I(0)=(\bar{I}(0),\tilde{I}(0)) \in B_\zeta(I_*)$, implies that for $t \leq \tau$,
\begin{equation}\label{lowerbound2}
|\partial_{\bar{\theta}}\bar{f}_\varepsilon(\bar{\theta}_*(t),I(t))|\gtrsim 1.
\end{equation}
But then using again the third equation of~\eqref{sys}, one has
\[ \left|\dot{\bar{I}}(t)\right| \gtrsim \varepsilon -\varepsilon\mu(\varepsilon) \gtrsim \varepsilon \]
which eventually gives
\[ |\bar{I}(\tau)-\bar{I}(0)|\gtrsim \tau\varepsilon \sim 1 \]
and so
\[ |I(\tau)-I(0)|\gtrsim 1. \]
This proves the theorem for $H \circ \Phi$, but the statement for $H$ is then obvious. Indeed, recall that $\Phi$ is symplectic, so that if $(\theta(t),I(t))$ is a solution of the system associated to $H \circ \Phi$, then $(\tilde{\theta}(t),\tilde{I}(t)):=\Phi(\theta(t),I(t))$ is a solution of the system associated to $H$. Moreover, $\Phi$ is close to the identity as expressed in~\eqref{estimates}, so that in particular the image of $\T^n \times B_\zeta(I^*)$ by $\Phi$ contains $\T^n \times B_*$, where $B_*$ is some open ball in $B$. Consider an arbitrary solution $(\tilde{\theta}(t),\tilde{I}(t))$ with $\tilde{I}(0) \in B_*$, then the corresponding solution $(\theta(t),I(t))$ has $I(0) \in B_\zeta(I^*)$, and therefore by the preceding, 
\[ |I(\tau)-I(0)|\gtrsim 1. \]
But then
\[ |\tilde{I}(\tau)-\tilde{I}(0)| \geq |I(\tau)-I(0)|-|I(\tau)-\tilde{I}(\tau)|-|I(0)-\tilde{I}(0)| \]
and using the estimate~\eqref{estimates} on $\Phi$, we have, for $\varepsilon$ small enough, 
\[ |\tilde{I}(\tau)-\tilde{I}(0)| \geq |I(\tau)-I(0)|-2C\mu_{\tilde{\alpha}}(\varepsilon)\gtrsim 1.  \]
This concludes the proof. 
\end{proof}

\begin{proof}[Proof of Theorem 1']
Now we are assuming that $f$ satisfies $(G1')$, that is for all $I \in \bar{B}$, there exists $\bar{\theta}=\bar{\theta}(I) \in \T^d$ such that $\partial_{\bar{\theta}}\bar{f}(\bar{\theta},I) \neq 0$. We are also assuming
\begin{equation}\label{speed11}
\lim_{\varepsilon \rightarrow 0}|\bar{f}_\varepsilon-\bar{f}|_{C^1(\T^d \times \bar{B})}=0.
\end{equation}
Given any $I \in \bar{B}$, there exist a positive constant $\zeta_I$ and an open neighborhood $U(I)$ of $I$ in $\bar{B}$ such that for any $I' \in U(I)$, we have 
\[ |\partial_{\bar{\theta}}\bar{f}(\bar{\theta},I')| \geq \zeta_I \] 
for some $\bar{\theta}=\bar{\theta}(I) \in \T^d$. The sets $U(I)$, for $I \in \bar{B}$, forms an open cover of $\bar{B}$ from which one can extract a finite open cover. Hence there exists $\zeta>0$ such that for all $I\in \bar{B}$, we have 
\[ |\partial_{\bar{\theta}}\bar{f}(\bar{\theta},I)| \geq \zeta \] 
for some $\bar{\theta}=\bar{\theta}(I) \in \T^d$. Using~\eqref{speed11}, for $\varepsilon$ sufficiently small with respect to $\zeta$, we obtain
\[ |\partial_{\bar{\theta}}\bar{f}_\varepsilon(\bar{\theta},I)| \geq \zeta. \] 
Once we have obtained a uniform constant, the rest of the proof is now similar to the proof of Theorem~\ref{thm1}.
\end{proof}

\section{Proof of Theorem~\ref{thm2} and Theorem 2'}\label{s5}

The proof of Theorem~\ref{thm2} will follow easily from Proposition~\ref{normal2} and a version of the classical KAM theorem that we will recall below.

Indeed, as before, Proposition~\ref{normal2} provides us with a (symplectic, close to the identity) change of coordinates which shows that the dominant part of the Hamiltonian is given by $h+\varepsilon\bar{f}_\varepsilon+\varepsilon g_\varepsilon$, where $\bar{f}_\varepsilon$ is the (full) average of $f_\varepsilon$. The system associated to this dominant part is now integrable, and because of the condition $(G2)$, the system associated to  $h+\varepsilon\bar{f}_\varepsilon$ is non-degenerate in the sense of Kolmogorov, and so is the system associated to $h+\varepsilon\bar{f}_\varepsilon+\varepsilon g_\varepsilon$ as $\varepsilon g_\varepsilon$ can be considered as an (integrable) perturbation. One has to be careful because the non-degeneracy is $\varepsilon$-dependent; yet we will see that this causes no problem.  
 
Let us now recall the statement of the classical KAM theorem. Consider a domain $D \subset \R^n$, two positive constants $\varrho$ and $\sigma$ and a Hamiltonian $H$ real-analytic on $V_\sigma \T^n \times V_\varrho D$ of the form
\begin{equation}\label{HamP}\tag{$H$}
H(\theta,I)=H_0(I)+H_1(\theta,I), \quad (\theta,I) \in V_\sigma \T^n \times V_\varrho D. 
\end{equation}
For any $I \in V_\varrho D$, the Hessian $\nabla^2 H_0(I)$ is assumed to be non-singular and moreover, we assume the existence of a constant $M>0$ such that
\begin{equation}\label{hess}
|\nabla^2 H_0(I)| \leq M, \quad |\left(\nabla^2 H_0(I)\right)^{-1}| \leq M.
\end{equation}
We can now state a version of the classical KAM theorem.

\begin{theorem}[KAM]\label{thmK}
Let $H$ be as in~\eqref{HamP}, with $H_0$ satisfying~\eqref{hess}. Assume moreover that $\nabla H_0$ is a diffeomorphism of $V_\varrho D$ onto its image. Then there exist constants $\nu_0>0$ and $E>0$, which depend only on $n$, $\sigma$, $\varrho$ and $M$, such that if
\[ \nu:=|H_1|_{\sigma,\varrho}\leq \nu_0,  \]
there exists a set $\mathcal{K}_\nu \subset \T^n \times D$ which consists of real-analytic Lagrangian quasi-periodic tori invariant by the Hamiltonian flow of $H$. Moreover, we have the measure estimate
\[ \mathrm{Leb}(\T^n \times D \setminus \mathcal{K}_\nu) \leq E\sqrt{\nu}\mathrm{Leb}(\T^n \times D) \] 
where $\mathrm{Leb}$ denotes Lebesgue measure. 
\end{theorem}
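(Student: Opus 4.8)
\textbf{Proof proposal for Theorem~\ref{thmK}.} The plan is to convert the statement into a KAM theorem \emph{with parameters}, the parameter being the frequency. Since $\omega:=\nabla H_0$ is, by hypothesis, a diffeomorphism of $V_\varrho D$ onto its image $\Omega$, and since $|\nabla^2 H_0|\leq M$ together with $|(\nabla^2 H_0)^{-1}|\leq M$ make both $\omega$ and $\omega^{-1}$ Lipschitz with constants depending only on $M$, the map $\omega$ distorts Lebesgue measure by a factor at most $M^n$ in either direction. It therefore suffices to produce, for every $\omega$ ranging over a subset $\Omega_*\subseteq\Omega\cap\R^n$ whose complement in $\Omega\cap\R^n$ has relative measure $\lesssim\sqrt\nu$, a real-analytic Lagrangian torus invariant by the flow of $H$ on which the flow is linear with frequency $\omega$; the union of these tori is then the desired $\mathcal{K}_\nu$, and the measure estimate follows by transporting the bound on $\Omega\setminus\Omega_*$ through $\omega^{-1}$.

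Fix a constant $\gamma>0$ (to be chosen) and, say, $\tau:=n$, and let $\Omega_\gamma\subseteq\Omega\cap\R^n$ be the set of $(\gamma,\tau)$-Diophantine frequencies lying moreover at distance at least $\gamma$ from the boundary of $\Omega$. The classical measure estimate — the set where $|k\cdot\omega|<\gamma|k|^{-\tau}$ meets any bounded region in a slab of measure $\lesssim\gamma|k|^{-\tau-1}$, and $\sum_{k\in\Z^n\setminus\{0\}}|k|^{-\tau-1}<\infty$ because $\tau+1>n$ — shows that $(\Omega\cap\R^n)\setminus\Omega_\gamma$ has relative measure $\lesssim\gamma$, the implied constant depending only on $n$ and on the fixed geometry of $D$.

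The analytic heart is a KAM iteration at a fixed Diophantine frequency: writing $H=H_0+H_1$ and treating $\omega$ as a parameter, one runs the standard quadratically convergent scheme. At each step one solves a cohomological equation to remove the mean-free part of the current perturbation along the torus of frequency $\omega$, reabsorbs the induced frequency shift by an affine adjustment of $H_0$ — this is where the non-degeneracy $|(\nabla^2 H_0)^{-1}|\leq M$ enters, to keep $\omega$ prescribed — and shrinks the widths $\sigma$, $\varrho$ by a summable amount, obtaining a new perturbation of quadratically smaller size. The Diophantine condition bounds the small divisors by $\lesssim\gamma^{-1}$ per step (the polynomial-in-step factors being swallowed by the quadratic gain), so the scheme converges as soon as $\nu/\gamma^2$ lies below a threshold depending only on $n$, $\sigma$, $\varrho$, $M$; for this step I would simply invoke the parametrized form of the classical KAM theorem (see \cite{Pos01}) rather than reproduce the iteration.

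To conclude, set $\gamma:=C_0\sqrt\nu$ with $C_0$ large enough that $\nu/\gamma^2=C_0^{-2}$ meets the threshold above; this is admissible provided $\nu\leq\nu_0$ for a suitable $\nu_0(n,\sigma,\varrho,M)$. Then every $\omega\in\Omega_\gamma$ carries an invariant torus, so $\Omega_*:=\Omega_\gamma$ works, $(\Omega\cap\R^n)\setminus\Omega_*$ has relative measure $\lesssim\gamma=C_0\sqrt\nu$, and transporting through $\omega^{-1}$ (distortion $\leq M^n$) gives $\mathrm{Leb}(\T^n\times D\setminus\mathcal{K}_\nu)\leq E\sqrt\nu\,\mathrm{Leb}(\T^n\times D)$ with $E=E(n,\sigma,\varrho,M)$. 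The main obstacle is, as always in KAM, the convergence of the iteration — controlling the accumulation of small divisors against the geometric loss of analyticity width — together with the bookkeeping that keeps $\nu_0$ and $E$ dependent only on the four stated quantities; the global hypothesis that $\nabla H_0$ is a diffeomorphism with bounded inverse Hessian is precisely what makes both the frequency-fixing inside the iteration and the final frequency-to-action measure transfer legitimate.
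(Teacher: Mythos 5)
The paper does not actually prove this statement: it is quoted verbatim from Theorem~A of~\cite{Pos82} and used as a black box. Your sketch is, in substance, the standard proof of that cited theorem — treat the frequency $\omega=\nabla H_0$ as a parameter, keep the $(\gamma,\tau)$-Diophantine frequencies, run the quadratic scheme at fixed $\omega$ using the bound on $(\nabla^2H_0)^{-1}$ to re-fix the frequency, and optimize $\gamma\sim\sqrt\nu$ against the threshold $\nu\lesssim\gamma^2$ — so the two routes coincide at the level of ideas; you simply shift the citation from \cite{Pos82} to the parametrized KAM theorem of \cite{Pos01} and redo the measure bookkeeping yourself. Two points deserve care in that bookkeeping. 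First, transporting $\mathrm{Leb}(\Omega\setminus\Omega_\gamma)\lesssim\gamma$ through $\omega^{-1}$ only controls the measure of the \emph{unperturbed} tori with good frequencies; to bound $\mathrm{Leb}(\T^n\times D\setminus\mathcal{K}_\nu)$ you need the invariant tori to depend Lipschitz (Whitney) continuously on $\omega$, with deformation $O(\nu/\gamma)$, so that their union still fills the corresponding set up to a comparable error — this is part of the output of the parametrized theorem and should be invoked explicitly. Second, the relative measure constant also involves the geometry of $D$ (boundary collar and $\mathrm{Leb}(D)$), which goes slightly beyond the stated dependence on $n,\sigma,\varrho,M$; harmless here, since in the application $D$ is a ball of radius $\sim 1$.
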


This is exactly the content of Theorem A in~\cite{Pos82}, to which we refer for a proof. We can now complete the proof of Theorem~\ref{thm2}.

\begin{proof}[Proof of Theorem~\ref{thm2}]
Recall that we are considering $H$ as in~\eqref{Ham2}, with $f$ satisfying $(G2)$, which is the existence of $I_* \in B$ such that $\nabla^2 \bar{f}(I_*)$ is non-singular.
Recall also that
\begin{equation}\label{speed2}
\lim_{\varepsilon \rightarrow 0}|\nabla^2 \bar{f}_\varepsilon(I_*)-\nabla^2 \bar{f}(I_*)|=0.
\end{equation}
From $(G2)$ we know that there exists a positive constant $M>0$ such that
\begin{equation*}
|\nabla^2 f(I^*)| \leq M/2, \quad |\left(\nabla^2 f(I^*)\right)^{-1}| \leq M/2.
\end{equation*}
Then, because of~\eqref{speed2}, for $\varepsilon$ sufficiently small with respect to $M$, we obtain
\begin{equation}\label{nonder}
|\nabla^2 f_\varepsilon(I^*)| \leq M, \quad |\left(\nabla^2 f_\varepsilon(I^*)\right)^{-1}| \leq M.
\end{equation}
As in the proof of Theorem~\ref{thm1}, we will now suppress all constants depending on $n$, $s$, $r$, $M$, the distance of $I_*$ to $\partial B=\bar{B}\setminus B$ and the function $\Delta_\alpha$ by using the symbols $\lesssim$, $\gtrsim$ and $\sim$.

First, we assume that $\varepsilon \lesssim 1$ so that $\mu_{\alpha}(\varepsilon) \leq \mu_0$ and Proposition~\ref{normal2} applies. Let us first prove the statement for the Hamiltonian $\bar{H}:=H \circ \Phi$ in normal form. To prove such a statement, let us consider the rescaled Hamiltonian $\bar{H}_\varepsilon:=\varepsilon^{-1}\bar{H}$, that is
\[ \bar{H}_\varepsilon(\theta,I)=\varepsilon^{-1}\alpha\cdot I+\bar{f}_\varepsilon(I)+g_\varepsilon(I)+f_\varepsilon^+(\theta,I) \]
that we can write
\[ \bar{H}_\varepsilon(\theta,I)=H_0(I)+H_1(\theta,I) \]
with
\[ H_0(I)=\varepsilon^{-1}\alpha\cdot I+\bar{f}_\varepsilon(I)+g_\varepsilon(I), \quad H_1(\theta,I)=f_\varepsilon^+(\theta,I). \]
We have
\[\nabla^2 H_0(I_*)=\nabla^2 \bar{f}_\varepsilon(I_*)+\nabla^2 g_\varepsilon(I_*), \]
so that using the estimate~\eqref{estimates2} on $g_\varepsilon$ and Cauchy estimates, we obtain
\[ |\nabla^2 H_0(I_*)-\nabla^2 \bar{f}_\varepsilon(I_*)| \lesssim \mu_{\alpha}(\varepsilon). \]
For $\varepsilon \lesssim 1$, using this last estimate, and the inequality~\eqref{nonder}, we obtain
\begin{equation*}
|\nabla^2 H_0(I_*)| \lesssim 1, \quad |\left(\nabla^2 H_0(I_*)\right)^{-1}| \lesssim 1.
\end{equation*}
Hence there exists $\zeta \sim 1$ such that $\nabla^2 H_0(I)$ is non-singular for all $I \in B_\zeta(I_*)$. Restricting $\zeta$ if necessary, we may assume that $\nabla^2 H_0(I)$ is non-singular for all $I \in V_\varrho B_\zeta(I_*)$, for some $0 < \varrho \leq r/2$, with the estimates
\begin{equation*}
|\nabla^2 H_0(I)| \lesssim 1, \quad |\left(\nabla^2 H_0(I)\right)^{-1}| \lesssim 1.
\end{equation*}
Moreover, by a further restriction of $\zeta$ if necessary, $\nabla H_0$ can be assumed to be a diffeomorphism of $V_\varrho B_\zeta(I_*)$ onto its image. Now we set $\sigma=s/2$, and the estimate~\eqref{estimates2} on $f_\varepsilon^+$ yields
\[ |H_1|_{\sigma,\varrho} \leq \nu_\alpha(\varepsilon) \sim \exp\left(-c'\mu_{\alpha}(\varepsilon)^{-1}\right). \]
Assuming again $\varepsilon \lesssim 1$ so that $\nu_\alpha(\varepsilon) \leq \nu_0$, we can apply Theorem~\ref{thmK} with $D=B_\zeta(I_*)$ and $\nu=\nu_\alpha(\varepsilon)$: there exists a set $\mathcal{K}_{\nu_\alpha(\varepsilon)} \subset \T^n \times B_\zeta(I_*)$ which consists of real-analytic Lagrangian quasi-periodic tori invariant by the Hamiltonian flow of $\bar{H}_\varepsilon$. Moreover, we have the measure estimate
\[ \mathrm{Leb}(\T^n \times B_\zeta(I_*) \setminus \mathcal{K}_{\nu_\alpha(\varepsilon)}) \lesssim \sqrt{\nu_\alpha(\varepsilon)}\mathrm{Leb}(\T^n \times B_\zeta(I_*)) \sim \exp\left(-\bar{c}\mu_{\alpha}(\varepsilon)^{-1}\right)\mathrm{Leb}(\T^n \times B_\zeta(I_*)) \]
with $\bar{c}=c'/2$. Now observe that the Hamiltonian flow of $\tilde{H}_\varepsilon$ differs from the Hamiltonian flow of $\tilde{H}$ by a constant time change: more precisely, $(\theta(t),I(t))$ is a solution of the system associated to $\bar{H}_\varepsilon$ if and only if $(\theta(\varepsilon^{-1}t),I(\varepsilon^{-1}t))$ is a solution of the system associated to $\bar{H}$. Hence $\mathcal{K}_{\nu_\alpha(\varepsilon)} \subset \T^n \times B_\zeta(I_*)$ is still invariant by the Hamiltonian flow of $\bar{H}$. To conclude, let $\mathcal{K}_\varepsilon:=\Phi^{-1}(\mathcal{K}_{\nu_\alpha(\varepsilon)})$ and set $B_*=B_{\zeta/2}(I_*)$. Using the estimate~\eqref{estimates2} on $\Phi$, for $\varepsilon \lesssim 1$ one can ensure that $\mathcal{K}_\varepsilon \subset \T^n \times B_*$. Moreover, since $\Phi$ is real-analytic and symplectic, then $\mathcal{K}_\varepsilon \subset \T^n \times B_*$ consists of real-analytic Lagrangian quasi-periodic tori invariant by the Hamiltonian flow of $H$. Moreover, using again the estimate~\eqref{estimates2} on $\Phi$, for $\varepsilon \lesssim 1$ the Jacobian of $\Phi$ can be made arbitrarily close to one, and since 
\[ \mathrm{Leb}(\T^n \times B_*) \sim \mathrm{Leb}(\T^n \times B_\zeta(I_*)) \]   
we eventually obtain
\[ \mathrm{Leb}(\T^n \times B_* \setminus \mathcal{K}_\varepsilon) \lesssim \exp\left(-\bar{c}\mu_{\alpha}(\varepsilon)^{-1}\right)\mathrm{Leb}(\T^n \times B_*). \]
This concludes the proof.      
\end{proof}

\begin{proof}[Proof of Theorem 2']
Now we are assuming that $f$ satisfies $(G2')$, that is $\nabla \bar{f} : \bar{B} \rightarrow \R^n$ is a diffeomorphism onto its image, and 
\begin{equation}\label{speed22}
\lim_{\varepsilon \rightarrow 0}|\bar{f}_\varepsilon-\bar{f}|_{C^2(\bar{B})}=0.
\end{equation}
Given any $I \in \bar{B}$, there exist a positive constant $M_I$ and an open neighborhood $U(I)$ of $I$ in $\bar{B}$ such that for any $I' \in U(I)$, we have 
\begin{equation*}
|\nabla^2 \bar{f}(I')| \leq M_I/2, \quad |\left(\nabla^2 \bar{f}(I')\right)^{-1}| \leq M_I/2.
\end{equation*}
As in the proof of Theorem 1', by a compactness argument one can find a positive constant $M$ such that for all $I \in \bar{B}$,
\begin{equation*}
|\nabla^2 \bar{f}(I)| \leq M/2, \quad |\left(\nabla^2 \bar{f}(I)\right)^{-1}| \leq M/2.
\end{equation*} 
Using~\eqref{speed22}, for $\varepsilon$ sufficiently small with respect to $M$, we get
\begin{equation*}
|\nabla^2 \bar{f}_\varepsilon(I)| \leq M, \quad |\left(\nabla^2 \bar{f}_\varepsilon(I)\right)^{-1}| \leq M.
\end{equation*} 
Having obtained a uniform constant, the rest of the proof is now similar to the proof of Theorem~\ref{thm2}.
\end{proof}

\section{Proof of Theorem~\ref{thm3} and Theorem 3'}\label{s6}

The proof of Theorem~\ref{thm3} will follow from Proposition~\ref{normal2} and a version of the classical Nekhoroshev theorem that we will recall below. 

Once again, Proposition~\ref{normal2} gives us a (symplectic, close to the identity) change of coordinates which shows that the dominant part of the Hamiltonian, which is given by $h+\varepsilon\bar{f}_\varepsilon+\varepsilon g_\varepsilon$, is integrable. Using the condition $(G3)$, we will show that the system associated to $h+\varepsilon\bar{f}_\varepsilon$ is generic in the sense of Nekhoroshev, and so is the system associated to $h+\varepsilon\bar{f}_\varepsilon+\varepsilon g_\varepsilon$. Here also, we will have to show that the $\varepsilon$-dependence of the integrable part is not an obstacle. 

The Nekhoroshev theorem requires a condition of steepness on the integrable part of the Hamiltonian, so we first recall this definition.

\begin{definition}\label{funcsteep}  
A differentiable function $h : B \rightarrow \R$ is steep on a domain $B' \subseteq B$ if there exist positive constants $C',\delta',p_l$, for any integer $l\in [1,n-1]$, and $\kappa$ such that for all $I \in B'$, we have $|\nabla h(I)| \geq \kappa$ and, for all integer $l\in [1,n-1]$, for all vector space $\Lambda \in \R^n$ of dimension $l$, letting $\lambda=I+\Lambda$ the associated affine subspace passing through $I$ and $h_\lambda$ the restriction of $h$ to $\lambda$, the inequality
\[ \max_{0 \leq \eta \leq \xi}\;\min_{|I'-I|=\eta, \; I' \in \lambda}|\nabla h_\lambda(I')-\nabla h_\lambda(I)|>C'\xi^{p_l} \]
holds true for all $0 < \xi \leq\delta'$. We say that $h$ is $(\kappa,C',\delta',(p_l)_{l=1,\ldots,n-1})$-steep on $B'$ and, if all the $p_i=p$, we say that $h$ is $(\kappa,C',\delta',p)$-steep on $B'$.
\end{definition} 

Next we need to make a link between this definition of steep function and the definition of stably steep polynomials (Definition~\ref{stabsteep} in Section~\ref{s2}): a function whose Taylor expansion at some point is stably steep is steep on some neighborhood of this point. Here's a precise statement.

\begin{proposition}\label{propsteep1}
Let $h : B_{\bar{\zeta}}(I_*) \rightarrow \R$ be a function of class $C^{m_0+1}$ such that $|\nabla h(I_*)|\geq\varpi>0$ and such that $T_{I_*}^{m}h$ is $(\rho,C,\delta)$-stably steep. Then for $\zeta>0$ sufficiently small with respect to $\bar{\zeta}$, $|h|_{C^{m_0+1}(B_{\bar{\zeta}})}$, $\rho$, $\varpi$, $m_0$, $C$ and $\delta$, the function $h$ is $(\kappa,C',\delta',m-1)$-steep on $B_{\zeta}(I_*)$ with
\[ \kappa=\varpi/2, \quad C'=C/2, \quad \delta'=\zeta.  \] 
\end{proposition}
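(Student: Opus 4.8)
The plan is to reduce Proposition~\ref{propsteep1} to the \emph{polynomial} stable steepness of $T_{I_*}^{m}h$ by comparing, at each point $I$ in a small ball $B_\zeta(I_*)$, the function $h$ with its Taylor polynomial of degree $2$ to $m$ at $I$. Concretely, for $\zeta$ small enough I want to verify the two requirements of Definition~\ref{funcsteep} on $B_\zeta(I_*)$: first the uniform gradient bound $|\nabla h(I)|\geq\varpi/2$, and second the oscillation inequality
\[ \max_{0\le\eta\le\xi}\;\min_{|I'-I|=\eta,\;I'\in\lambda}|\nabla h_\lambda(I')-\nabla h_\lambda(I)|>\tfrac{C}{2}\,\xi^{m-1} \]
for every $I\in B_\zeta(I_*)$, every $l\in[1,n-1]$, every subspace $\Lambda$ of dimension $l$ (with $\lambda=I+\Lambda$), and every $0<\xi\le\zeta$. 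The exponent $p_l=m-1$ is uniform in $l$, which is exactly what the polynomial condition delivers, and the key point is that stable steepness is, by design, an \emph{open} condition, so it survives the unavoidable errors coming from Taylor expansion.

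\textbf{Gradient bound and polynomial ingredient.} Since $h\in C^{m_0+1}$, $\nabla h$ is continuous with $|\nabla h|_{C^0}\le|h|_{C^{m_0+1}}$, so choosing $\zeta\le\min\{\bar\zeta/2,\;\varpi/(2|h|_{C^{m_0+1}})\}$ gives $|\nabla h(I)-\nabla h(I_*)|\le\varpi/2$, hence $|\nabla h(I)|\ge\varpi/2=\kappa$ on $B_\zeta(I_*)$. For the oscillation inequality, fix $I\in B_\zeta(I_*)$, $l$, $\Lambda$ as above and set
\[ P_I(X)=\sum_{j=2}^{m}(j!)^{-1}\nabla^j h(I)\cdot(X)^j\in P_2(n,m). \]
(Here and below I take $m_0\geq m$, as is implicit in the hypothesis since $T_{I_*}^m h$ must be defined; if $h$ is only $C^m$ one replaces Lipschitz bounds on $\nabla^j h$ by the modulus of continuity of $\nabla^m h$, with no other change.) The coefficients of $P_I$ are the entries of $(j!)^{-1}\nabla^j h(I)$, $j=2,\dots,m$, so $|P_I-P_{I_*}|\lesssim|h|_{C^{m_0+1}}|I-I_*|\le|h|_{C^{m_0+1}}\zeta$, which is $<\rho$ once $\zeta$ is small. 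Since $P_{I_*}=T_{I_*}^m h$ is $(\rho,C,\delta)$-stably steep, the polynomial $P_I$ therefore satisfies, for our $\Lambda$,
\[ \max_{0\le\eta\le\xi}\;\min_{|x|=\eta,\;x\in\Lambda}|\nabla(P_I)_\Lambda(x)|>C\xi^{m-1},\qquad 0<\xi\le\delta, \]
and I also impose $\zeta\le\delta$ so that this is valid for all $0<\xi\le\zeta$.

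\textbf{Taylor remainder and conclusion.} Identifying $\lambda$ with $\Lambda$ via $I'=I+X$, one has $\nabla h_\lambda(I+X)=\pi_\Lambda\nabla h(I+X)$ where $\pi_\Lambda$ is the orthogonal projection onto $\Lambda$, and for $X\in\Lambda$ the gradient of the restriction satisfies $\pi_\Lambda\nabla P_I(X)=\nabla(P_I)_\Lambda(X)$. Taylor's formula applied to $\nabla h$ (which is $C^m$) at $I$ gives, for $|X|\le\zeta$ (so $I+X\in B_{\bar\zeta}(I_*)$),
\[ \nabla h(I+X)-\nabla h(I)=\nabla P_I(X)+\mathrm{Rem}_I(X),\qquad |\mathrm{Rem}_I(X)|\le(m!)^{-1}|h|_{C^{m_0+1}}|X|^{m}, \]
hence $\nabla h_\lambda(I+X)-\nabla h_\lambda(I)=\nabla(P_I)_\Lambda(X)+\pi_\Lambda\mathrm{Rem}_I(X)$. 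Now fix $0<\xi\le\zeta$ and pick $\eta^*\in[0,\xi]$ with $\min_{|x|=\eta^*,\,x\in\Lambda}|\nabla(P_I)_\Lambda(x)|>C\xi^{m-1}$; then for every $x\in\Lambda$ with $|x|=\eta^*$,
\[ |\nabla h_\lambda(I+x)-\nabla h_\lambda(I)|\ge C\xi^{m-1}-(m!)^{-1}|h|_{C^{m_0+1}}(\eta^*)^{m}\ge\Big(C-(m!)^{-1}|h|_{C^{m_0+1}}\zeta\Big)\xi^{m-1}>\tfrac{C}{2}\xi^{m-1} \]
as soon as $(m!)^{-1}|h|_{C^{m_0+1}}\zeta<C/2$. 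Taking the minimum over $|x|=\eta^*$ and then the maximum over $\eta\le\xi$ yields the oscillation inequality with $C'=C/2$, $\delta'=\zeta$, $p_l=m-1$, uniformly in $I$, $l$, $\Lambda$; combined with the gradient bound, $h$ is $(\varpi/2,C/2,\zeta,m-1)$-steep on $B_\zeta(I_*)$. Collecting the finitely many smallness requirements on $\zeta$ gives the dependence on $\bar\zeta$, $|h|_{C^{m_0+1}}$, $\rho$, $\varpi$, $m_0$, $C$, $\delta$ asserted in the statement.

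\textbf{Main obstacle.} There is no deep difficulty: the openness encoded in Definition~\ref{stabsteep} does the real work, and the exponent $m-1$ of the polynomial lower bound strictly dominates the $m$-th order Taylor error for small radii. The only point requiring care is the nesting of quantifiers in the oscillation condition — ``there exists a radius $\eta$ such that the gradient is large on the \emph{entire} sphere of that radius'' — so that one must feed the \emph{same} radius $\eta^*$ (the one realizing the polynomial maximum for $P_I$) into the estimate for $h$; this is legitimate precisely because the remainder bound $|\mathrm{Rem}_I(X)|\lesssim|h|_{C^{m_0+1}}|X|^m$ is uniform over spheres of radius $\le\xi$, uniform over $I\in B_\zeta(I_*)$, and trivially uniform over $\Lambda$. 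Everything else is routine Taylor estimation.
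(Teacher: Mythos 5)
Your argument is correct: comparing $\nabla h$ near each point $I\in B_\zeta(I_*)$ with the gradient of the shifted Taylor polynomial $P_I$, using the openness built into $(\rho,C,\delta)$-stable steepness to absorb the coefficient drift $|P_I-P_{I_*}|\lesssim |h|_{C^{m_0+1}}\zeta<\rho$, and beating the $O(|X|^{m})$ Taylor remainder against the margin $C/2$ (with the same radius $\eta^*$ fed into both the polynomial and the function estimate) is precisely the standard proof of this proposition; the paper itself gives no proof and defers to the cited references, whose argument is essentially the one you wrote. The only cosmetic points are that your gradient bound should formally invoke the bound on $\nabla^2 h$ (mean value theorem) rather than on $\nabla h$ itself, and that the implicit assumption $m_0\geq m$ and the norm conventions for restrictions to subspaces only affect harmless constants absorbed in the smallness of $\zeta$.
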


For a proof, we refer to Theorem $2.2$ in~\cite{BFN15} or Proposition $3.2$ in~\cite{BFN15b}.

We can eventually state the theorem of Nekhoroshev (\cite{Nek77}, \cite{Nek79}). To do this, we consider again a real-analytic Hamiltonian $H$ as in~\eqref{HamP}, that is
\begin{equation*}
H(\theta,I)=H_0(I)+H_1(\theta,I), \quad (\theta,I) \in V_\sigma \T^n \times V_\varrho D,
\end{equation*}
where $D$ is some domain in $\R^n$ and $\sigma$ and $\varrho$ are positive constants. The integrable Hamiltonian will be assumed to be a steep function on $D$, and its Hessian will be assumed to be uniformly bounded on $V_\varrho D$, that is there exists a positive constant $M$ such that for all $I \in V_\varrho D$,
\begin{equation}\label{hess2}
|\nabla^2 H_0(I)| \leq M.
\end{equation}
Here's the statement. 

\begin{theorem}[Nekhoroshev]\label{thmN}
Let $H$ be as in~\eqref{HamP}, with $H_0$ a $(\kappa,C',\delta',p)$-steep function on $D$ satisfying~\eqref{hess2}. Then there exists a constant $\nu_0>0$, which depends only on $n$, $\sigma$, $\varrho$, $\kappa$, $C'$, $\delta'$, $p$ and $M$, and constants $a>0$ and $b>0$ which depend only on $n$ and $p$, such that if
\[ \nu:=|H_1|_{\sigma,\varrho}\leq \nu_0,  \]
then for any solution $(\theta(t),I(t))$ with $I(0)\in D-\nu^b$, we have
\[ |I(t)-I(0)| \leq \nu^b/2, \quad |t| \leq \nu^{-1}\exp\left(\nu^{-a}\right).  \]
\end{theorem}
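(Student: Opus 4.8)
The plan is to prove this by the classical two-part argument behind the Nekhoroshev theorem, in the version that works for steep (not just convex) integrable parts. The small parameter is $\nu=|H_1|_{\sigma,\varrho}$, and I would fix a cutoff order $K$ to be optimized at the end as a suitable negative power of $\nu$. The first ingredient is the \emph{geometry of resonances}: for each $l\in[0,n-1]$ and each rank-$l$ sublattice $\Lambda\subseteq\Z^n$ spanned by integer vectors of norm $\leq K$, one defines the resonant zone $D_\Lambda\subseteq D$ to be the set of $I$ for which the only $k$ with $|k|\leq K$ and $|k\cdot\nabla H_0(I)|$ below a chosen $K$- and $\nu$-dependent threshold are those in $\Lambda$. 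Nekhoroshev's geometric lemma — the one place where steepness of $H_0$ (through $|\nabla H_0|\geq\kappa$ and the inequalities with constants $C',\delta'$ and exponent $p$) is genuinely used — guarantees that these zones cover $D-\nu^{b}$ and have good geometry: inside $D_\Lambda$ the fast-drift directions stay near $\Lambda_\R:=\Lambda\otimes\R$, and the level sets of the restriction of $H_0$ to affine planes parallel to $\Lambda_\R$ are uniformly non-flat at the relevant scale.

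The second ingredient is \emph{analytic}: on each $D_\Lambda$, slightly enlarged in the angular strip, I would construct by iterated averaging over the non-resonant angles a real-analytic symplectic $\Phi_\Lambda$, $C$-close to the identity, with $H\circ\Phi_\Lambda=H_0+g_\Lambda+R_\Lambda$, where $g_\Lambda$ depends on $\theta$ only through the combinations $k\cdot\theta$, $k\in\Lambda$ (so $\partial_\theta g_\Lambda(\theta,I)\in\Lambda_\R$ identically), with $|g_\Lambda|\lesssim\nu$ and $|R_\Lambda|\lesssim\nu\exp(-cK)$; here~\eqref{hess2} bounds the small divisors $|k\cdot\nabla H_0(I)|^{-1}$ for $|k|\leq K$, $k\notin\Lambda$. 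Optimizing $K$ as a negative power of $\nu$ with exponent depending only on $n$ and $p$ makes $|R_\Lambda|\lesssim\exp(-\nu^{-a})$.

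Combining the two is the \emph{confinement (``resonant trap'') argument}. When a solution currently lies in $D_\Lambda$, in $\Phi_\Lambda$-coordinates $\dot I=-\partial_\theta g_\Lambda-\partial_\theta R_\Lambda$, so the component of $I(t)-I(0)$ transverse to $\Lambda_\R$ drifts only at rate $|\partial_\theta R_\Lambda|\lesssim\exp(-\nu^{-a})$, while the component along $\Lambda_\R$ is held in check by conservation of $H\approx H_0$: the orbit stays near a level set of the restriction $(H_0)_\lambda$ of $H_0$ to the affine plane $\lambda$ through $I(0)$ parallel to $\Lambda_\R$, and the steepness inequality $\max_{0\leq\eta\leq\xi}\min_{|I'-I(0)|=\eta,\,I'\in\lambda}|\nabla(H_0)_\lambda(I')-\nabla(H_0)_\lambda(I(0))|>C'\xi^{p}$ bounds the along-$\lambda$ excursion by $\lesssim(\text{small})^{1/p}\lesssim\nu^{b}$ before the orbit must leave $D_\Lambda$, an exit which can only lead into some $D_{\Lambda'}$ with $\mathrm{rank}\,\Lambda'<\mathrm{rank}\,\Lambda$; in the fully non-resonant zone $\Lambda=\{0\}$ the drift is purely exponentially slow. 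A downward induction on the rank patches these local bounds into $|I(t)-I(0)|\leq\nu^{b}/2$ for $|t|\leq\nu^{-1}\exp(\nu^{-a})$, with the number of resonant levels and the exponent $p$ forcing $a,b$ to depend only on $n$ and $p$, and $\nu_0$ only on $n,\sigma,\varrho,\kappa,C',\delta',p,M$ — exactly the uniformity that is exploited when this theorem is invoked in the proof of Theorem~\ref{thm3}.

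The main obstacle is the geometric part together with the trap induction: for a general steep $H_0$ — as opposed to a convex or quasi-convex one, where energy conservation alone settles the confinement in a few lines — one must produce the covering of $D$ by resonant zones of good geometry and convert the abstract steepness inequalities into quantitative confinement bounds at each resonance level, all while tracking that the final $a,b$ and $\nu_0$ depend only on $(\kappa,C',\delta',p,M)$ and on $n,\sigma,\varrho$. This is the genuine content of Nekhoroshev's theorem; the analytic normal-form construction is by now routine exponential-estimate bookkeeping.
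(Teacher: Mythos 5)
Your proposal follows the standard three-part proof of Nekhoroshev's theorem for steep integrable parts (geometric covering by resonant zones via steepness, analytic resonant normal forms with exponentially small remainders, and the rank-decreasing trapping/induction argument), which is exactly the scheme of the proof the paper relies on: the paper does not reprove this statement but refers for it to the main theorem of Nekhoroshev's 1977 paper, Section 4.4. So your approach is essentially the same as the paper's, with the hard steps you explicitly defer (the covering lemma and the confinement induction, with constants depending only on $n,\sigma,\varrho,\kappa,C',\delta',p,M$) being precisely the content of the cited reference.
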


This is exactly the content of the main theorem in~\cite{Nek77}, Section \S 4.4, to which we refer for a proof. We can now complete the proof of Theorem~\ref{thm3}.

\begin{proof}[Proof of Theorem~\ref{thm3}]
Recall that we are considering $H$ as in~\eqref{Ham2}, with $f$ satisfying $(G3)$, which is the existence of a point $I_* \in B$ and an integer $m \geq m_n$ such that the polynomial $T_2^{m}\bar{f}(I_*) \in P_2(n,m)$ is stably steep. Recall also that
\begin{equation}\label{speed3}
\lim_{\varepsilon \rightarrow 0}\left(\max_{2 \leq l \leq m}|\nabla^l \bar{f}_\varepsilon(I_*)-\nabla^l \bar{f}(I_*)|\right)=0.
\end{equation}
From $(G3)$ we know that there exist positive constant $\rho$, $C$ and $\delta$ such that the polynomial $T_2^{m}\bar{f}(I_*) \in P_2(n,m)$ is $(2\rho,C,\delta)$-stably steep. Then, using~\eqref{speed3}, for $\varepsilon$ sufficiently small with respect to $\delta$, the polynomial $T_2^{m}\bar{f}_\varepsilon(I_*) \in P_2(n,m)$ is $(\rho,C,\delta)$-stably steep. As in the proofs of Theorem~\ref{thm1} and Theorem~\ref{thm2}, we will now suppress all constants depending on $n$, $s$, $r$, $m$, $\rho$, $C$, $\delta$, $|\nabla^2 \bar{f}(I_*)|$, the distance of $I_*$ to $\partial B=\bar{B}\setminus B$ and the function $\Delta_\alpha$ by using the symbols $\lesssim$, $\gtrsim$ and $\sim$. 

First, we assume that $\varepsilon \lesssim 1$ so that $\mu_{\alpha}(\varepsilon) \leq \mu_0$ and Proposition~\ref{normal2} applies. We will first prove a statement for the Hamiltonian $\bar{H}:=H \circ \Phi$ in normal form. To prove such a statement, again, we consider the rescaled Hamiltonian $\bar{H}_\varepsilon:=\varepsilon^{-1}\bar{H}$, that can be written 
\[ \bar{H}_\varepsilon(\theta,I)=H_0(I)+H_1(\theta,I) \]
with
\[ H_0(I)=\varepsilon^{-1}\alpha\cdot I+\bar{f}_\varepsilon(I)+g_\varepsilon(I), \quad H_1(\theta,I)=f_\varepsilon^+(\theta,I). \]
Let us first establish the fact that $H_0$ is steep in a neighborhood of the point $I_*$. Using the estimate~\eqref{estimates2} on $g_\varepsilon$ and Cauchy estimates, we obtain that for all $2 \leq l \leq m$,
\[ |\nabla^l H_0(I_*)-\nabla^l \bar{f}_\varepsilon(I_*)| \lesssim \mu_{\alpha}(\varepsilon). \]
Therefore for $\varepsilon \lesssim 1$, the polynomial $T_2^{m}H_0(I_*) \in P_2(n,m)$ is stably steep. Moreover, it is easy to see that for $\varepsilon \lesssim 1$, we have
\[ |\nabla H_0(I_*)| \gtrsim \varepsilon^{-1} \gtrsim 1. \] 
We can thus apply Proposition~\ref{propsteep1} to find $\zeta \sim 1$ such that $H_0$ is a steep function on $B_\zeta(I_*)$. Choosing $\varrho=r/2$ and $\sigma=s/2$, we also have
\[\sup_{I \in V_\varrho B_\zeta(I_*)}|\nabla^2 H_0(I)| \lesssim 1\] 
and
\[ |H_1|_{\sigma,\varrho} \leq \nu_\alpha(\varepsilon) \sim \exp\left(-c'\mu_{\alpha}(\varepsilon)^{-1}\right). \]
Assuming again $\varepsilon \lesssim 1$ so that $\nu_\alpha(\varepsilon) \leq \nu_0$, we can eventually apply Theorem~\ref{thmN} with $D=B_\zeta(I_*)$ and $\nu=\nu_\alpha(\varepsilon)$ and we obtain the following statement: for any solution $(\theta(t),I(t))$ of the system associated to $\bar{H}_\varepsilon$ with $I(0) \in B_\zeta(I_*)-\nu_\alpha(\varepsilon)^b$, we have
\[ |I(t)-I(0)| \leq \nu_\alpha(\varepsilon)^b/2, \quad |t| \leq \nu_\alpha(\varepsilon)^{-1}\exp\left(\nu_\alpha(\varepsilon)^{-a}\right).  \]
For $\varepsilon \lesssim 1$, observe that we can assume that $B_\zeta(I_*)-\nu_\alpha(\varepsilon)^b$ contains $B_{\zeta/2}(I_*)$, hence the estimate above applies for any $I(0) \in B_{\zeta/2}(I_*)$. Now let us recall that $(\theta(t),I(t))$ is a solution of the system associated to $\bar{H}_\varepsilon$ if and only if $(\theta(\varepsilon^{-1}t),I(\varepsilon^{-1}t))$ is a solution of the system associated to $\bar{H}$. Hence for any solution $(\theta(t),I(t))$ of the system associated to $\bar{H}$ with $I(0) \in B_{\zeta/2}(I_*)$, we have
\[ |I(t)-I(0)| \leq \nu_\alpha(\varepsilon)^b/2, \quad |t| \leq \varepsilon^{-1}\nu_\alpha(\varepsilon)^{-1}\exp\left(\nu_\alpha(\varepsilon)^{-a}\right).  \] 
Recalling the definition of $\nu_\alpha(\varepsilon)$, for $\varepsilon$ sufficiently small this inequality can be arranged to give
\[ |I(t)-I(0)| \leq \nu_\alpha(\varepsilon)^b/2, \quad |t| \leq \varepsilon^{-1}\exp\left(\exp\left(-\tilde{c}\mu_{\alpha}(\varepsilon)^{-1}\right)\right)  \]
with $\tilde{c} \sim c'$. Coming back to the original Hamiltonian $H$, as $\Phi$ is symplectic, $(\theta(t),I(t))$ is a solution of the system associated to $H \circ \Phi$ if and only if $(\tilde{\theta}(t),\tilde{I}(t)):=\Phi(\theta(t),I(t))$ is a solution of the system associated to $H$. Since $\Phi$ is closed to the identity as expressed in~\eqref{estimates2}, the image of $\T^n \times B_{\zeta/2}(I_*)$ contains $\T^n \times B_*$ for some open ball $B_*$ contained in $B$. Consider an arbitrary solution $(\tilde{\theta}(t),\tilde{I}(t))$ with $\tilde{I}(0) \in B_*$, then the corresponding solution $(\theta(t),I(t))$ has $I(0) \in B_{\zeta/2}(I_*)$, and therefore  
\[ |I(t)-I(0)| \leq \nu_\alpha(\varepsilon)^b/2, \quad |t| \leq \varepsilon^{-1}\exp\left(\exp\left(-\tilde{c}\mu_{\alpha}(\varepsilon)^{-1}\right)\right). \] 
But then, for $|t| \leq \varepsilon^{-1}\exp\left(\exp\left(-\tilde{c}\mu_{\alpha}(\varepsilon)^{-1}\right)\right)$, we have
\[ |\tilde{I}(t)-\tilde{I}(0)| \leq |\tilde{I}(t)-I(t)|+|I(t)-I(0)|+|I(0)-\tilde{I}(0)| \]
and using the estimate~\eqref{estimates2} on $\Phi$, we have, for $|t| \leq \varepsilon^{-1}\exp\left(\exp\left(-\tilde{c}\mu_{\alpha}(\varepsilon)^{-1}\right)\right)$, 
\[ |\tilde{I}(\tau)-\tilde{I}(0)| \leq \nu_\alpha(\varepsilon)^b/2+2C\mu_\alpha(\varepsilon) \lesssim \mu_\alpha(\varepsilon).  \]
This concludes the proof. 
\end{proof}

\begin{proof}[Proof of Theorem 3']
Now we are assuming that $f$ satisfies $(G3')$, that is there exists an integer $m \geq m_n$ such that for all $I \in \bar{B}$, the polynomial $T_2^{m}\bar{f}(I) \in P_2(n,m)$ is stably steep. We are also assuming 
\begin{equation}\label{speed33}
\lim_{\varepsilon \rightarrow 0}|\bar{f}_\varepsilon-\bar{f}|_{C^m(\bar{B})}=0.
\end{equation}
As in the proofs of Theorem 1' and Theorem 2', since the set of stably steep polynomials is open, by a compactness argument one can find positive constants $\rho$, $C$ and $\delta$ such that for all $I \in \bar{B}$, the polynomial $T_2^{m}\bar{f}(I) \in P_2(n,m)$ is $(2\rho,C,\delta)$-stably steep. Then, using~\eqref{speed33}, for $\varepsilon$ sufficiently small with respect to $\delta$, for all $I \in \bar{B}$ the polynomial $T_2^{m}\bar{f}_\varepsilon(I) \in P_2(n,m)$ is $(\rho,C,\delta)$-stably steep. As before, once we have obtained uniform constants, the rest of the proof is now similar to the proof of Theorem~\ref{thm3}.
\end{proof}

\addcontentsline{toc}{section}{References}
\bibliographystyle{amsalpha}
\bibliography{PertLineaire}

\end{document}